\theoremstyle{definition}
\newtheorem{defin}{Definition}[section]
\newtheorem{theorem}{Theorem}
\numberwithin{theorem}{section}
\newtheorem{lemma}{Lemma}
\numberwithin{lemma}{section}
\newtheorem{remark}{Remark}[section]
\numberwithin{cor}{section}
\newcommand{\R}{\mathbb{R}}
\newcommand{\Z}{\mathbb{Z}}
\renewcommand{\div}{\operatorname{div}}
\newcommand{\dist}{\mathrm{dist}}
\begin{document}


{\large


\begin{center}

\vspace*{0.5cm}

{\Large {\bf On the use of tent spaces for solving PDEs: A proof of the Koch-Tataru theorem}}

\vspace*{0.5cm}
by\\
{\Large {\bf Pascal Auscher and Ioann Vasilyev\\ }}

\vspace*{0.5cm}

Lecture notes written by Ioann VASILYEV for the mini-course of Pascal AUSCHER given at the Workshop\\
Inhomogeneous Flows: Asymptotic Models and Interfaces Evolution \\
23-27 September, 2019

\vspace*{0.5cm}
\end{center}}

\chapter*{Abstract}
\addcontentsline{toc}{section}{\numberline{}Abstract}
In these notes we will present (a part of) the parabolic tent spaces theory and then apply it in solving some PDE's originated from the fluid mechanics. In more details, to our most interest are the incompressible homogeneous Navier--Stokes equations. These equations 
have been investigated mathematically for almost one century. Yet, the question of proving well-posedness (i.e. existence, uniqueness and regularity of solutions) lacks satisfactory answer.

 A large part of the known positive results in connection with Navier--Stokes equations are those in which the initial data $u_0$ is supposed to have a small norm in some \emph{critical} or \emph{scaling invariant} functional space.
All those spaces are embedded in the homogeneous Besov space $\dot B^{-1}_{\infty,\infty}.$
A breakthrough was made in the paper~\cite{kochtat} by Koch and Tataru, where the authors showed the existence and the uniqueness of solutions to the Navier--Stokes system in case when the norm $\|u_0\|_{\mathrm{BMO}^{-1}}$ is small enough. The principal goal of these notes is to present in detail a new proof of the theorem by Koch and Tataru on the Navier--Stokes system using the tent spaces theory. We do not mean new in the sense simpler but we hope that after having read these notes, the reader will be convinced that the theory of tent spaces is highly likely to be useful in the study of other equations in fluid mechanics.

These notes are mainly based on the content of the article~\cite{pasdor} by P. Auscher and D. Frey. However, in~\cite{pasdor} the authors deal with a slightly more general system of parabolic equations of Navier--Stokes type. Here we have chosen to write down a self-contained text treating only the relatively easier case of the classical incompressible homogeneous Navier--Stokes equations.

\bigbreak\noindent
 {\bf Keywords~:}
Incompressible homogeneous Navier--Stokes equations, Tent spaces, Hardy spaces.

\chapter{Introduction}
\section{Some history}
Let us more rigorously formulate the main result of these notes. To this end, we first introduce a number of definitions.
\begin{defin}
Let $u$ be a vector-field with components in the Schwartz class $\mathrm S(\R^n)$. Leray's projector $\mathbb P(u)$ is defined by $\mathbb P(u):=u-\nabla \Delta^{-1}(\nabla \cdot u)$.

 In other words, this projector is the operator with the Fourier multiplier matrix symbol $M_{\mathbb P}(\xi):=\left(\delta_{i,j}-\frac{\xi_i \xi_j}{|\xi|^2}\right)_{1\leq i,j\leq n}.$ 
\end{defin}
\begin{remark}
There is an equivalent definition of the operator $\mathbb P$ which is~: 
$$\mathbb P(u):=u+(\mathcal R\otimes \mathcal R)u\quad\hbox{with}\quad
\mathcal R:=(\mathcal R_1,\ldots, \mathcal R_n),$$ where $\mathcal R_i$ for $i=1,\ldots, n$ stands for the Riesz transform in $\R^n$.
\end{remark}

\begin{remark}
Note that if $u$ is a vector-field, then $\div(\mathbb P(u))=0.$
\end{remark}

We are ready to introduce the incompressible homogeneous Navier--Stokes equations:
\begin{equation}
\label{inhom}
\begin{cases}
\partial_t u + (u \cdot \nabla) u -\Delta u + \nabla p=0,\\
\div u=0, \\ u(0,\cdot) = u_0(\cdot).
\end{cases}
\end{equation}
Here is the physical meaning of the terms above : $p: \mathbb R^n\rightarrow \mathbb R$ is the pressure of the ``ideal'' fluid, $u: \mathbb R^n \rightarrow \mathbb R^n$ is its velocity vector-field and $u_0$ is the initial value of the velocity. The first line in \eqref{inhom} is called momentum equation. The equation $\div u=0$ means that the fluid is incompressible.
\smallbreak
We shall consider in what follows the following \emph{Duhamel's formulation} (also called \emph{mild formulation})
 of the Navier--Stokes equations~:
\begin{equation}
\label{duhmel}
u(t,\cdot)=e^{t\Delta}u_0(\cdot)-\int_0^t e^{(t-s)\Delta} \mathbb P \mathrm{div} (u(s,\cdot)\otimes u(s,\cdot)) \,ds.
\end{equation}
We remind  the reader that for vector-fields $u$ and $v$ the notation $u\otimes v$ stands for the matrix valued function, obtained by multiplying each coordinate function of $u$ by each coordinate function of $v,$ namely  $(u\otimes v)_{i,j}(x):=u_i(x)v_j(x).$ For a matrix valued function $A(x)=(a_1(x),\ldots, a_n(x))^T$ (where $a_j(x)$ is the $j$-th row vector of the matrix $A$), its divergence is defined by $ \mathrm{div}(A):= (\mathrm{div}\, a_1,\ldots, \mathrm{div}\, a_n).$ 

Let us stress that the equation~\eqref{duhmel} is equivalent to the system~\eqref{inhom} under a very mild assumptions on $u_0$. This is proved for instance in~\cite{lemar}. See also \cite{dubois}. The solutions to the system~\eqref{duhmel} are called mild solutions.
\smallbreak
In connection with the system~\eqref{duhmel}, we define the bilinear form $B$ as follows~:
\begin{equation}
B(u,v)(t,\cdot)=\int_0^t e^{(t-s)\Delta} \mathbb P \mathrm{div} (u(s,\cdot)\otimes v(s,\cdot)) \,ds.
\end{equation} 

We are now in position to formulate the Koch and Tataru theorem.
\begin{theorem}
\label{kochtat}
 If the norm $\|u_0\|_{\mathrm{BMO}^{-1}}$ is small enough, then Equations~\eqref{inhom} 
 admit a unique global solution in a ball of the functional space $X$ (see Definition  \ref{prvoX}).
\end{theorem}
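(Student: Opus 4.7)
The plan is to reformulate the Duhamel equation~\eqref{duhmel} as a fixed-point problem $u = e^{t\Delta}u_0 - B(u,u)$ in the Banach space $X$ and to apply the standard Picard lemma: provided $X$ is a Banach space, $B\colon X \times X \to X$ is a bounded bilinear map with operator norm $C$, and $\|e^{t\Delta}u_0\|_X \leq \eta$ with $4C\eta < 1$, then the equation admits a unique solution $u$ satisfying $\|u\|_X \leq 2\eta$. Since both $\mathrm{BMO}^{-1}$ and $X$ are invariant under the parabolic scaling of the Navier--Stokes system, the solution obtained this way will automatically be global in time. The proof therefore decomposes into (i) a linear estimate $\|e^{t\Delta}u_0\|_X \lesssim \|u_0\|_{\mathrm{BMO}^{-1}}$, (ii) a bilinear estimate $\|B(u,v)\|_X \lesssim \|u\|_X \|v\|_X$, and (iii) a direct invocation of Picard once $\|u_0\|_{\mathrm{BMO}^{-1}}$ is chosen small enough.

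For (i), the key input is the parabolic Carleson-measure characterization of $\mathrm{BMO}^{-1}$: a tempered distribution $u_0$ belongs to $\mathrm{BMO}^{-1}$ if and only if $|e^{t\Delta}u_0(x)|^2\, dx\, dt$ is a parabolic Carleson measure on $\R^n \times (0,\infty)$, with equivalent norms. Translated into the tent-space language developed in the preceding chapters, this says exactly that $u_0 \mapsto e^{t\Delta}u_0$ maps $\mathrm{BMO}^{-1}$ boundedly into the parabolic tent space whose norm accounts for the Carleson part of the $X$-norm. The companion pointwise piece $\sup_{t>0}\sqrt{t}\,\|e^{t\Delta}u_0\|_\infty \lesssim \|u_0\|_{\mathrm{BMO}^{-1}}$ is a classical heat-kernel estimate.

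For (ii), I would lean on the tent-space machinery. The convolution kernel of $e^{(t-s)\Delta}\mathbb{P}\div$ is a parabolic derivative of a heat kernel, and therefore enjoys Gaussian off-diagonal bounds together with a gain of $(t-s)^{-1/2}$. The $\sqrt{t}\,\|\cdot\|_\infty$ part of the bilinear bound then follows by integrating these pointwise kernel estimates against $\|u \otimes v\|_\infty \leq \|u\|_\infty \|v\|_\infty$. The Carleson portion is treated by viewing $u \otimes v$ as an element of a product tent space controlled by $\|u\|_X \|v\|_X$ via a H\"older-type inequality for tent spaces, and then showing that the operator $f \mapsto \int_0^t e^{(t-s)\Delta}\mathbb{P}\div(f(s,\cdot))\,ds$ maps this intermediate tent space continuously back into the tent space defining $X$.

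The hard part is precisely this Carleson-measure bilinear bound. By tent-space duality it reduces to a testing condition against atoms of a $T^1$-type predual space supported over parabolic Carleson boxes; one has to exploit both the cancellation encoded by $\mathbb{P}\div$ (to prevent a logarithmic divergence near the diagonal $t=s$) and the Gaussian decay of $e^{(t-s)\Delta}\mathbb{P}\div$ at far scales. This is the point at which the abstract boundedness results for parabolic singular integrals between tent spaces proved in \cite{pasdor} do the real work: they repackage the delicate kernel manipulations as a clean composition of known tent-space mapping properties, sidestepping the ad hoc computations of the original argument of Koch and Tataru. Once (i) and (ii) are established, a straightforward application of Picard to $\Phi(u) := e^{t\Delta}u_0 - B(u,u)$ on a sufficiently small ball of $X$ yields existence and uniqueness of a global mild solution as soon as $\|u_0\|_{\mathrm{BMO}^{-1}}$ is small enough.
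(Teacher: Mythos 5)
Your scaffolding coincides with the paper's: Duhamel's formulation, Picard's contraction principle on a small ball of $X$, the linear estimate $\|e^{t\Delta}u_0\|_X\lesssim\|u_0\|_{\mathrm{BMO}^{-1}}$ (which here is essentially definitional for the $T^{\infty,2}$ part, plus a classical heat-kernel bound for the $\sup_t\sqrt t\,\|\cdot\|_\infty$ part), and the reduction of everything to the bilinear bound $\|B(u,v)\|_X\lesssim\|u\|_X\|v\|_X$. Your idea of controlling $u\otimes v$ in an ``intermediate'' space by H\"older and then proving a linear mapping property for $f\mapsto\int_0^t e^{(t-s)\Delta}\mathbb P\,\mathrm{div}\,f\,ds$ is exactly the paper's passage to the operator $A:Y\to X$ with $\|\alpha\|_Y=\|t\alpha\|_{L^\infty}+\|\alpha\|_{T^{\infty,1}}$. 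The problem is that this mapping property \emph{is} the theorem: it occupies the entire second chapter, and at precisely this point you write that the abstract tent-space results of \cite{pasdor} ``do the real work.'' Since these notes are a self-contained proof of that very result, the proposal as written leaves the essential step unproved rather than proving it by a different route.

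Two of your concrete claims would moreover not survive an attempt to fill in the details. First, the pointwise bound $\sqrt t\,|B(u,v)(t,x)|\lesssim\|u\|_X\|v\|_X$ cannot be obtained by ``integrating the kernel estimates against $\|u\|_\infty\|v\|_\infty$'' alone: the kernel of $e^{(t-s)\Delta}\mathbb P\,\mathrm{div}$ contributes $(t-s)^{-1/2}$ after integration in $y$, while $\|u(s)\otimes v(s)\|_\infty\lesssim s^{-1}$, and $\int_0^t(t-s)^{-1/2}s^{-1}\,ds$ diverges at $s=0$; the paper's estimate \eqref{first} must use the $T^{\infty,1}$ (Carleson) information on $u\otimes v$ for the regions $s\le t/2$ and $|x-y|\ge\sqrt t$, reserving the $L^\infty$ bound for the near region $s\in(t/2,t)$, $|x-y|<\sqrt t$. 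Second, a single duality argument against $T^{1}$-type atoms does not handle the whole operator: the factorization that makes the adjoint tractable, namely $A_2^*(G)(s,\cdot)=e^{s\Delta}\int_0^\infty\nabla\mathbb P e^{t\Delta}G(t,\cdot)\,dt$ (landing in $T^{1,\infty}$ via Hardy molecules), is only available for the time-separated kernel $e^{(t+s)\Delta}$. This forces the decomposition $A=A_1+A_2+A_3$ of \eqref{1}--\eqref{3}: the near-diagonal piece $A_1$, carrying the factor $(I-e^{2s\Delta})$, is treated by de Simon's maximal regularity theorem extrapolated from $L^2$ to $T^{\infty,2}$ through off-diagonal estimates, and $A_3$ is a remainder controlled by a Schur test. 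Your proposal contains no substitute for the maximal-regularity ingredient, so the Carleson part of the bilinear bound does not close as sketched.
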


We shall give a rigorous definition of the space $\mathrm{BMO}^{-1}$ later 
on\footnote{By $\mathrm{BMO}^{-1}$ here and everywhere after in this text we mean the homogeneous version of this space.}.  For the time being, the reader should think about this space as the set of  distributions $u_0$ such that $u_0=\mathrm{div}(v_0)$ for some vector-field $v_0$ with components in 
the space $\mathrm{BMO}(\R^n).$  
 By $\mathrm{BMO}(\R^n),$ we mean  the space of functions whose mean oscillation is bounded, 
 namely   $\upsilon\in\mathrm{BMO}(\R^n)$ if and only if
$$\sup_{B}\frac{1}{|B|}\int_B|\upsilon(x)-\upsilon_B|\,dx<\infty
\quad\hbox{with}\quad \upsilon_B:=\frac1{|B|}\int_B \upsilon,$$
where $B$ is a  (Euclidean) open ball, $|B|$ is its Lebesgue measure and  the supremum is taken over all  balls $B \subset \R^n$.  The space $\mathrm{BMO}(\R^n)$ is very important in the modern harmonic and Fourier analysis, see for instance the books~\cite{stein} and~\cite{grafmod} and the article~\cite{cora}.
\smallbreak
Several remarks are in order. First, note that the system~\eqref{inhom} is scale invariant under the following parabolic scaling~: 
$$(u,p)(t,x)\leadsto  (\lambda u, \lambda^3p)(\lambda^2t,\lambda x)\quad\hbox{and}\quad
u_0(x)\leadsto \lambda u_0(\lambda x),\qquad\lambda>0.$$
Consequently, one can expect optimal functional spaces for solving this system by means of the Picard fixed point theorem to have the above invariance, for all $\lambda>0$. In particular, `critical' spaces
for initial data have  homogeneity $-1$. Second, the space $\mathrm{BMO}^{-1}$ is contained in the homogeneous Besov space $\dot B^{-1}_{\infty,\infty},$ both spaces are critical for the Navier--Stokes system, and the latter one is the largest critical space. However, it was shown by Bourgain and Pavlovic, see~\cite{bourpav} that the equations~\eqref{inhom} are ill posed in $B^{-1}_{\infty,\infty},$ and this is our third remark here. Finally, we would like to cite some previous results in the direction of the well-posedness of the system~\eqref{inhom}. Cannone (see~\cite{canplan}) considered $u_0\in \dot B^{-1+n/p}_{p,\infty}(\R^n).$ Earlier, Fujita and Kato in \cite{FK} proved well-posedness 
(in the three-dimensional case) for $u_0$ in the Sobolev space $\dot H^{1/2}(\R^3),$ and 
Kato in the paper~\cite{kato} examined the case of small initial data in $L^{n}(\R^n)$ (see also
\cite{GM}).   

Taking into account the  mild formulation \eqref{duhmel}, it does not come up as a surprise that the proof  of well-posedness is based on a fixed point argument. Namely, this will be the Picard contraction principle applied in a very special functional context.
In the following section, we introduce  the  functional spaces in which we will solve 
the Navier-Stokes equations and some related definitions and results. These spaces will allow us to formulate and prove a theorem which is the core of our proof of the Koch and Tataru result. 


\section{Main definitions and auxiliary results}
Here we collect the most important definitions and results that we shall need afterwards. We begin with the tent spaces.
\begin{defin}
\label{palatprvo}
 Let $ \mathbb R^{n+1}_+:=\bigl\{(t,x),\: t>0\ \hbox{ and }\ x\in\R^n\bigr\}\cdotp$
We shall say that a measurable function $\alpha: \mathbb R^{n+1}_+\rightarrow \mathbb C^n\otimes \mathbb C^n$ belongs to the $(\infty,p)$-parabolic tent space $T^{\infty,p}(\mathbb R^{n+1}_+, \mathbb C^n\otimes \mathbb C^n)$ where $1\le p<\infty$ if
$$\|\alpha\|^p_{T^{\infty,p}(\mathbb R^{n+1}_+, \mathbb C^n\otimes \mathbb C^n)}:=\sup\limits_{x_0\in \mathbb R^n, R>0} \frac{1}{|B(x_0,R)|}\int_{B(x_0,R)\times [0,R^2]}|\alpha(t,x)|^p\, dx\, dt< \infty.$$
By $B(x,R)$ we designate the open Euclidean ball with center $x$ and radius $R$.
\end{defin}
\begin{remark}
In analogy with this definition one can define tent spaces with different target space. Everywhere in this text, we shall simply denote by  $T^{\infty,p}$  the space $T^{\infty,p}(\mathbb R^{n+1}_+, \mathbb C^n\otimes \mathbb C^n),$ unless otherwise specified.
\end{remark}

Tent spaces were first introduced by Coifman, Meyer and Stein in the paper~\cite{cms}, in  the elliptic setting.

\medbreak
Define further the so-called ``path" spaces that will play an important role in what follows.
\begin{defin}
\label{prvoX}
We shall say that a measurable function $u:\mathbb R^{n+1}_+\rightarrow \mathbb C^n\otimes \mathbb C^n$ belongs to the space $X$ if
$$\|u\|_{X}:=\|(t,x)\mapsto t^{1/2}u(t,x)\|_{L_{t,x}^\infty(\mathbb R^{n+1}_+, \mathbb C^n\otimes \mathbb C^n)}+\|u\|_{T^{\infty,2}(\mathbb R^{n+1}_+,\mathbb C^n\otimes \mathbb C^n)}<\infty.$$
Note that $X$ is a Banach space.
\end{defin}
\begin{defin}
\label{heat}
For $t>0,$ define the heat kernel $\Phi_t: \mathbb{R}^n \rightarrow \mathbb R$, by the following formula $\Phi_t(x):=(4\pi t)^{-n/2}e^{-|x|^2/4t}.$ If we denote by $\Delta$
the Laplace operator in $\R^n,$
 the function given by $U(t,\cdot)=e^{t\Delta}u_0:=\Phi_t\ast u_0$ is a solution of the heat equation 
$$\partial_t U= \Delta U, \qquad U(0,\cdot)=u_0.$$ By definition, we say that $u_0\in \mathrm{BMO}^{-1}$ if $U\in T^{\infty,2}.$
\end{defin}
\begin{remark}
Note that the just given definition of the space  $\mathrm{BMO}^{-1}$ coincides with the one discussed after the formulation of Theorem~\ref{kochtat}. The proof of this fact is contained in the article~\cite{kochtat} and is based on the ``caloric extension'' characterization of the space $\mathrm{BMO}(\R^n)$.
\end{remark}
\begin{defin}
\label{prvoY}
We shall say that a measurable function $\alpha:\mathbb R^{n+1}_+\rightarrow \mathbb C^n\otimes \mathbb C^n$ belongs to the space $Y$ if
$$\|\alpha\|_{Y}:=\|(t,x)\mapsto t\alpha(t,x)\|_{L_{t,x}^\infty(\mathbb R^{n+1}_+,\mathbb C^n\otimes \mathbb C^n)}+\|\alpha\|_{T^{\infty,1}(\mathbb R^{n+1}_+,\mathbb C^n\otimes \mathbb C^n)}<\infty.$$
Note that $Y$ is a Banach space.
\end{defin}

One of the possible approaches to use while trying to solve the Navier--Stokes system is ``separation of time and space''. This  is called the maximal regularity setting.
\begin{defin}\label{defn:maxreg}
 The operator $M^+$ that is defined below is called the maximal regularity operator~:
$$M^+f(t, \cdot):=\int_0^t e^{(t-\tau)\Delta}\Delta f(\tau,\cdot) \,d\tau.$$ 
This is well-definef for $f\in D= L^2(\mathbb R_+^{n+1})\cap L^1(\mathbb R_+, H^2(\mathbb R^n))$.
\end{defin}

Next we state the de Simon theorem, which says that the maximal regularity operator is bounded on $L^2$.
\begin{theorem}
There exists a constant $C$ depending only on $n$ such that for all $f\in D$ holds
$$\|M^+f\|_{L^2(\mathbb R_+^{n+1})} \leq C \|f\|_{L^2(\mathbb R_+^{n+1})}.$$
\end{theorem}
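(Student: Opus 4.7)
My plan is to prove the inequality by Fourier analysis in both space and time, essentially showing that $M^+$ is an $L^2$-bounded Fourier multiplier on $\mathbb{R}^{n+1}$ after extending functions by zero to negative times.

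First, I would extend any $f\in D$ to all of $\mathbb{R}^{n+1}$ by setting $f(\tau,\cdot)=0$ for $\tau<0$; because the integrand of $M^+f$ is supported in $\{0\le\tau\le t\}$, the operator can then be written as a convolution on the line in the $t$ variable. Applying the spatial Fourier transform (allowed because $f(\tau,\cdot)\in H^2(\mathbb{R}^n)$ for a.e.\ $\tau$ by the hypothesis $f\in L^1(\mathbb{R}_+,H^2)$) turns $\Delta$ into multiplication by $-|\xi|^2$ and the heat semigroup into multiplication by $e^{-(t-\tau)|\xi|^2}$, so
\[
\widehat{M^+f}(t,\xi) \;=\; -|\xi|^2\int_{-\infty}^{t} e^{-(t-\tau)|\xi|^2}\,\widehat{f}(\tau,\xi)\,d\tau \;=\; \bigl(K_\xi *_t \widehat{f}(\cdot,\xi)\bigr)(t),
\]
with causal kernel $K_\xi(s):=-|\xi|^2 e^{-s|\xi|^2}\mathbf{1}_{s>0}$.

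Next I would take the Fourier transform in $t$ (call its dual variable $\sigma$). A direct computation gives
\[
\widehat{K_\xi}(\sigma) \;=\; -|\xi|^2\int_0^\infty e^{-s|\xi|^2}e^{-is\sigma}\,ds \;=\; \frac{-|\xi|^2}{|\xi|^2+i\sigma}.
\]
Hence $M^+$ acts on the space–time Fourier side as multiplication by the symbol $m(\sigma,\xi)=-|\xi|^2/(|\xi|^2+i\sigma)$, and the elementary bound
\[
|m(\sigma,\xi)|^2 \;=\; \frac{|\xi|^4}{|\xi|^4+\sigma^2} \;\le\; 1
\]
holds pointwise on $\mathbb{R}\times\mathbb{R}^n\setminus\{0\}$.

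By Plancherel applied in both variables, this yields $\|M^+f\|_{L^2(\mathbb{R}^{n+1})}\le\|f\|_{L^2(\mathbb{R}^{n+1})}$, and since the extended $f$ agrees with the original on $\mathbb{R}^{n+1}_+$ and $M^+f$ vanishes for $t<0$, we obtain $\|M^+f\|_{L^2(\mathbb{R}^{n+1}_+)}\le\|f\|_{L^2(\mathbb{R}^{n+1}_+)}$, giving the constant $C=1$. The only subtle point I foresee is the justification of the Fourier manipulations: I would first check everything on the dense subclass of functions in $D$ that are Schwartz in $x$ and compactly supported in $t>0$, where Fubini and the Fourier computations are immediate, and then extend the estimate to all of $D$ by density. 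Once the bound is established on $D$, a standard density argument extends $M^+$ to a bounded operator on $L^2(\mathbb{R}^{n+1}_+)$.
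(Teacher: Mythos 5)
Your proof is correct, and it reaches the same conclusion by a more concrete route than the paper. The paper proves de Simon's theorem in its general form: for any generator $-A$ of a bounded analytic semigroup on a Hilbert space, it extends $f$ by zero on $\mathbb{R}_-$, takes the Laplace transform of $u(t)=\int_0^t e^{-(t-\tau)A}f(\tau)\,d\tau$ at $z=\xi+i\eta$ with $\eta<0$ to get $\widehat u(z)=[iz+A]^{-1}\widehat f(z)$, invokes the resolvent bound $\|iz[iz+A]^{-1}\|\le m_0$ furnished by analyticity, applies Plancherel at each fixed $\eta$, lets $\eta\to 0^-$, and concludes via $M_Af=-u'+f$. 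You specialize to $A=-\Delta$ and additionally conjugate by the spatial Fourier transform, which diagonalizes everything: the abstract resolvent estimate collapses to the scalar inequality $\bigl|\,|\xi|^2/(|\xi|^2+i\sigma)\bigr|\le 1$, the damped Laplace transform becomes unnecessary because your causal kernel satisfies $\|K_\xi\|_{L^1(\mathbb R)}=1$ for every $\xi\neq 0$ (so Young's inequality in $t$ would in fact finish the proof without transforming in time at all), and you obtain the sharp constant $C=1$ rather than $m_0+1$. What you give up is generality: your argument relies on $\Delta$ being a Fourier multiplier, so it does not extend to the non-self-adjoint or variable-coefficient generators for which the general de Simon theorem (and the Auscher--Frey framework underlying these notes) is intended. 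Your treatment of the justification issue --- verifying the computation on a dense subclass and passing to all of $D$ --- is adequate and no less careful than the paper's own handling of the same point.
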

For the reader's convenience, we state a slightly more general result, that is
taken  from the paper~\cite{desimon}.
\begin{theorem}
Let $H$ be a Hilbert space and let $A$ be an operator such that the operator $(-A)$ generates an analytic semigroup bounded by the constant $m_0$. Consider the maximal regularity operator associated with $A,$ namely
$$M_Af(t,\cdot):=\int_0^t e^{-(t-\tau)A}A f(\tau,\cdot) \,d\tau,\qquad t\in{\mathbb R},  \qquad f\in L^2(\mathbb R_+, H) \cap L^1(\mathbb R_+, D(A)).$$
For all $f\in L^2(\mathbb R_+, H) \cap L^1(\mathbb R_+, D(A)),$ there holds
$$\|M_Af\|_{L^2(\mathbb R_+, H)} \leq (m_0+1) \|f\|_{L^2(\mathbb R_+, H)}.$$
\end{theorem}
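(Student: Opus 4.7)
The plan is to prove the bound via a Fourier-multiplier argument on the real line, reducing the maximal regularity estimate to a uniform operator-valued bound on the resolvent of $A$ along the imaginary axis, then invoking Plancherel's theorem for $H$-valued $L^2$ functions.

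First I would extend the datum $f \in L^2(\mathbb{R}_+, H) \cap L^1(\mathbb{R}_+, D(A))$ by zero for $t<0$, obtaining $\tilde f \in L^2(\mathbb{R}, H) \cap L^1(\mathbb{R}, D(A))$. Since the integrand in $M_A f$ vanishes for $\tau<0$ and the kernel $K(s) := e^{-sA}A$ for $s>0$ (zero otherwise) is operator-valued, we may rewrite
\begin{equation*}
(M_A f)(t) = \int_{\mathbb{R}} K(t-\tau)\tilde f(\tau)\,d\tau, \qquad t\in\mathbb{R},
\end{equation*}
so that $M_A f$ is a convolution in the time variable. The key computation is the Fourier transform in $t$: for $f\in D(A)$ pointwise, a change of variables $s=t-\tau$ and Fubini give
\begin{equation*}
\widehat{M_A f}(\xi) = \left(\int_0^\infty e^{-i\xi s} e^{-sA}\,ds\right) A\,\hat f(\xi) = A(i\xi I + A)^{-1}\hat f(\xi),
\end{equation*}
where $\hat{\phantom{f}}$ denotes the $H$-valued Fourier transform in $t$, and the operator integral converges in the strong sense thanks to the boundedness of the semigroup.

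Second, I would establish a uniform bound for the operator-valued multiplier $m(\xi) := A(i\xi I + A)^{-1}$. Since $-A$ generates a bounded analytic semigroup with $\|e^{-tA}\|\le m_0$, the Hille--Yosida theorem (applied in the sector of analyticity, extended up to the imaginary axis by the analytic-semigroup hypothesis) gives $\|\lambda(\lambda I + A)^{-1}\|_{H\to H} \le m_0$ for every $\lambda$ with $\operatorname{Re}\lambda \ge 0$, $\lambda\ne 0$. Writing the algebraic identity
\begin{equation*}
A(i\xi I + A)^{-1} = I - i\xi (i\xi I + A)^{-1},
\end{equation*}
and taking operator norms yields $\|m(\xi)\|_{H\to H} \le 1 + m_0$ for a.e. $\xi\in\mathbb{R}$.

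Third, I would conclude by Plancherel's identity for $H$-valued square-integrable functions: applied coordinatewise (or via the Hilbert-space-valued version directly), it gives $\|M_A f\|_{L^2(\mathbb{R},H)} = \|\widehat{M_A f}\|_{L^2(\mathbb{R},H)}$ and $\|\hat f\|_{L^2(\mathbb{R},H)} = \|f\|_{L^2(\mathbb{R},H)}$; combining this with the pointwise multiplier bound gives
\begin{equation*}
\|M_A f\|_{L^2(\mathbb{R}_+,H)} = \|M_A f\|_{L^2(\mathbb{R},H)} \le (m_0+1)\,\|f\|_{L^2(\mathbb{R},H)} = (m_0+1)\,\|f\|_{L^2(\mathbb{R}_+,H)}.
\end{equation*}

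The main obstacle will be justifying the Fourier-transform computation rigorously for the operator-valued kernel $K(s) = e^{-sA}A\mathbf{1}_{s>0}$ (which is not in $L^1$ as an operator-valued function in general), and obtaining the resolvent bound on the imaginary axis with the clean constant $m_0$. The first point is handled by working on the dense subclass $D$ where $Af(\tau,\cdot)\in L^2$, so that the Bochner integrals and Fubini applications are unambiguous, then extending to all of $L^2(\mathbb{R}_+,H)$ by density and the closed-graph-type argument that $M_A$ extends to a bounded operator. The second point relies critically on the analytic-semigroup hypothesis, which upgrades the Hille--Yosida bound from the open right half-plane to its closure minus the origin, so that the multiplier symbol $m(\xi)$ is well-defined and uniformly bounded on $\mathbb{R}\setminus\{0\}$.
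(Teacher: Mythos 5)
Your proposal is correct and follows essentially the same route as the paper: extend $f$ by zero, transform in time, bound the operator-valued symbol by $1+m_0$ via the resolvent estimate $\|i z (i z+A)^{-1}\|\leq m_0$, and conclude with Plancherel. The only technical difference is that the paper works with the Laplace transform at $z=\xi+i\eta$, $\eta<0$ (where the defining integral converges without any boundary-spectrum discussion) and lets $\eta\to 0$ at the end, writing $M_Af=-u'+f$ rather than splitting the symbol as $I-i\xi(i\xi+A)^{-1}$; this sidesteps the justification you flag for the multiplier on the imaginary axis itself.
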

\begin{proof} Let us extend $f$ by $0$ on ${\mathbb R}_-$ and introduce
$$u(t):= \int_0^t e^{-(t-\tau)A} f(\tau,\cdot)\,d\tau.$$
 For $z\in \mathbb C$ such that $\mathrm{Im}(z)<0,$ the Laplace transform of $u$ reads:
$$\widehat{u}(z):=\int_{\R} e^{- i z t} u(t) \,dt.$$
Note that if $z=\xi+i\eta $ and $\eta<0$ then
\begin{equation*}
\widehat{u}(z):=\int_{\R}\int_{0}^t e^{- i z t} e^{-(t-\tau)A}f(\tau) \,d\tau\, dt =\int_{0}^{+\infty}\int_{\tau}^{+\infty} e^{t(-i z -A) + \tau A} f(\tau) \,dt \,d\tau,
\end{equation*}
where in the second equality we performed a change of variables and used the fact that $f(\tau)=0$ when $\tau<0.$ Since $\eta<0$, the inner integral of the exponential function with respect to the variable $t$  in the right-hand side of the last line above converges. In fact, this integral can be calculated explicitely, giving :
  \begin{equation*}
\widehat{u}(z)=\int_{\R} [- i z -A]^{-1}(-e^{-i z \tau})f(\tau) \,d\tau=[ i z + A]^{-1}\widehat{f}(z).
\end{equation*}

{}From the relation  $z=\xi+i \eta$, we infer the following formula~:
 \begin{equation}
\label{simon1}
 i z \widehat{u}(z)= i z[i z + A]^{-1}\mathcal{F}(e^{ \eta (\cdot)} f(\cdot))(\xi),
\end{equation}
where $\mathcal{F}$ stands for the usual Fourier transform on ${\mathbb R}.$ 
\smallbreak
Since $-A$ generates an analytic semigroup bounded by $m_0,$ we have that
\begin{equation}
\label{simon2}
\| i z[ i z + A]^{-1}\|\leq m_0,
\end{equation}
once again for all $z=\xi+i \eta$ satisfying $\eta<0$.
\medbreak
Observe that if $z=\xi+i \eta$, then~\eqref{simon1} yields
\begin{equation}
\label{simon3}
i z \widehat{u}(z)=\int_{\R}e^{- i z t}u^\prime(t) \,dt=\int_{\R}e^{- i \xi t} e^{ \eta t}u^\prime(t) \,dt = \mathcal{F}(e^{\eta (\cdot)} u^\prime(\cdot))(\xi).
\end{equation}
The last formula follows from one of the basic properties of the Laplace transform (Laplace transform of derivative). 

Fix $\eta<0.$ Since $f\in L^2,$ the Fourier-Plancherel theorem and lines~\eqref{simon1},~\eqref{simon2} and~\eqref{simon3} yield
$$\begin{aligned}
2\pi\|e^{\eta (\cdot)} u^\prime(\cdot)\|_{L^2}\leq \|\mathcal{F}(e^{\eta (\cdot)} u^\prime(\cdot))\|_{L^2}
 &=\|z\widehat u\|_{L^2}\\
&\leq m_0 \|\mathcal{F}(e^{\eta (\cdot)} f(\cdot))\|_{L^2}=2\pi m_0 \|e^{\eta (\cdot)} f(\cdot)\|_{L^2}.
\end{aligned}
$$
To conclude, it suffices to let $\eta$ tend to $0$ and to observe that $M_A f=-u'+ f.$
\end{proof}

Further properties of maximal regularity operators acting on tent spaces can be found for instance in the paper~\cite{amp}.
\medbreak
In the end of this chapter we recall some estimates that concern the Oseen kernel (i.e. the kernel of the operator $e^{t\Delta}\mathbb P$) and its derivatives. The reason why we will need these estimates is that they imply the so-called off-diagonal estimates and the latter are going to be very important for our goals.
\begin{theorem}
Let $\sigma_t$ denote the kernel of the operator $e^{t\Delta}\mathbb P$. For all $\beta\in\mathbb N^n, x\in \R^n$ and $t>0,$ there holds
\begin{equation}
\label{oseen}
|t^{|\beta|/2}\partial^{\beta}_x \sigma_t(x)| \leq C t^{-n/2}\left(1+\frac{|x|}{t^{1/2}}\right)^{-n-|\beta|},
\end{equation}
where $C$ is a constant depending on $n$ and $\beta$ only.
\end{theorem}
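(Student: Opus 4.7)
The plan is to reduce to a scale-invariant statement by parabolic rescaling and then obtain the pointwise bound from an explicit integral representation of $\sigma_1$ in terms of Gaussian kernels. First I would exploit the fact that the matrix symbol $M_{\mathbb P}(\xi)$ is homogeneous of degree zero in $\xi$: performing the change of variable $\xi=\eta/\sqrt{t}$ in the Fourier inversion formula for $\sigma_t$ produces the scaling identity $\sigma_t(x)=t^{-n/2}\sigma_1(x/\sqrt{t})$, so that $t^{|\beta|/2}\partial^{\beta}_x \sigma_t(x)=t^{-n/2}(\partial^\beta\sigma_1)(x/\sqrt{t})$. Hence the theorem reduces to the scale-invariant bound $|\partial^\beta \sigma_1(y)|\leq C(1+|y|)^{-n-|\beta|}$ for $y\in\R^n$.

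Next I would split off the identity part of the Leray projector and rewrite the remaining Riesz contribution via $|\xi|^{-2}=\int_0^\infty e^{-s|\xi|^2}\,ds$. After the change of variable $\tau=1+s$ and Fourier inversion, this yields the representation
$$\sigma_1^{ij}(x)=\Phi_1(x)\delta_{ij}+\int_1^\infty \partial_i\partial_j \Phi_\tau(x)\,d\tau,$$
with $\partial_i\partial_j\Phi_\tau(x)=\Phi_\tau(x)\bigl(x_ix_j/(4\tau^2)-\delta_{ij}/(2\tau)\bigr)$. Every additional $x$-derivative of $\Phi_\tau$ produces a Hermite-type polynomial in $x/\sqrt{\tau}$ multiplied by a further factor $\tau^{-1/2}\Phi_\tau(x)$, so differentiation under the integral sign is legitimate; in particular the boundary contribution $\partial^\beta \Phi_1(x)\delta_{ij}$ has Gaussian decay and is trivially dominated by the required bound.

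It remains to estimate the integral term pointwise. For $|x|\leq 1$ a crude bound replacing the Gaussian by $1$ produces an integrand of size $O(\tau^{-n/2-|\beta|/2-1})$, integrable on $[1,\infty)$ with $\tau$-independent constants. For $|x|\geq 1$ I would use the substitution $u=|x|^2/(4\tau)$, which converts the $\tau$-integral into a finite sum of incomplete Gamma integrals of the form $|x|^{-n-|\beta|}\int_0^{|x|^2/4}u^{m}e^{-u}\,du$, each dominated by $\Gamma(m+1)$ and hence giving the desired $O(|x|^{-n-|\beta|})$ bound.

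The main obstacle is essentially bookkeeping: justifying the above representation rigorously in low dimension, where $\mathcal F^{-1}[|\xi|^{-2}e^{-|\xi|^2}]$ fails to be a pointwise function when $n\leq 2$, and verifying that the multitude of polynomial factors produced by Leibniz combine into the single uniform bound $(1+|y|)^{-n-|\beta|}$. The conceptual point behind the estimate is that the non-smoothness of $M_{\mathbb P}$ at the origin is exactly what prevents Schwartz-class decay of $\sigma_t$ and is captured precisely by the factor $|\xi|^{-2}$ in the Riesz piece, producing polynomial decay of order $n+|\beta|$ and no better.
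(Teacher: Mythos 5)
The paper itself does not prove this theorem: it simply defers to \cite[Prop.~11.1]{lemar}. Your argument is correct and is essentially the standard proof one finds there. The parabolic rescaling $\sigma_t(x)=t^{-n/2}\sigma_1(x/\sqrt{t})$, which is valid precisely because $M_{\mathbb P}$ is homogeneous of degree zero, correctly reduces the claim to $|\partial^\beta\sigma_1(y)|\leq C_{n,\beta}(1+|y|)^{-n-|\beta|}$, and the identity
$$\sigma_1^{ij}(x)=\Phi_1(x)\,\delta_{ij}+\int_1^\infty \partial_i\partial_j\Phi_\tau(x)\,d\tau$$
is exactly the classical representation of the Oseen kernel (it is the statement that $(-\Delta)^{-1}\Phi_1=\int_1^\infty\Phi_\tau\,d\tau$ on the Fourier side). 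Your two-regime estimate is also right: for $|x|\le 1$ the integrand is $O(\tau^{-n/2-|\beta|/2-1})$ and the integral is $O(1)$, while for $|x|\ge 1$ the substitution $u=|x|^2/(4\tau)$ yields $|x|^{-n-|\beta|}$ times a convergent Gamma-type integral (the Hermite factors become bounded polynomials in $\sqrt{u}$ absorbed by $e^{-u}$). One remark: the ``obstacle'' you flag in low dimensions is not actually there in the form you set things up. You never need to invert $|\xi|^{-2}e^{-|\xi|^2}$ by itself; the quantity you invert is $\xi_i\xi_j\,e^{-\tau|\xi|^2}$ integrated over $\tau\in[1,\infty)$, and since $\int_1^\infty\int_{\R^n}|\xi|^2e^{-\tau|\xi|^2}\,d\xi\,d\tau\lesssim\int_1^\infty\tau^{-n/2-1}\,d\tau<\infty$ for every $n\ge 1$, Fubini and differentiation under the integral sign are immediate. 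So the proof is complete as sketched, with constants depending only on $n$ and $\beta$ as required.
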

The proof of this theorem can be found for example in~\cite[Prop. 11.1]{lemar}.
\medbreak
Finally, let us present  the mentioned above off-diagonal estimates.
\begin{defin}
\label{offdiag}
A family of bounded linear operators $(T_t)_{t>0}$ on $L^2(\R^n)$ is said to satisfy off-diagonal estimates of order $M$, with homogeneity $m$, if there exists a constant $C$ such that for all Borel sets $E,F \subset \R^n$, all $t>0$, and all $f\in L^2(\R^n),$ there holds~:
\begin{equation}
\|\mathbbm{1}_E T_t\mathbbm{1}_F f\|_{L^2}\leq C\left(1+\frac{\mathrm{dist}(E,F)^m}{t}\right)^{-M} \|\mathbbm{1}_F f\|_{L^2}.
\end{equation}
\end{defin}

It is well known that for many differential operators $L$ of order $m$ (such as, for $m=2,$  divergence form elliptic operators with bounded measurable complex coefficients), the family $(tLe^{-tL})_{t\geq 0}$ satisfies off-diagonal estimates of any order, with homogeneity~$m$. This is proved for instance in~\cite{ahlmt}.

\bigbreak Throughout the rest of the text $C$ denotes a ``harmless'' constant. The sign $\lesssim$ indicates that the left-hand part of an inequality is less than the right-hand part multiplied by a constant $C$ as above.
 
 
\chapter{Proof of the  Koch and Tataru theorem via tent spaces}
\section{Some preliminary observations}

First of all, let us state the principal result, from which Koch and Tataru's theorem will follow easily via Picard's contraction principle.
\begin{theorem}
\label{zero}
Let $X$ be the Banach space from Definition~\ref{prvoX}. The bilinear operator $B:X\times X\rightarrow X$ is bounded.
\end{theorem}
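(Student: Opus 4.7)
The plan is to factor the bilinear form as $B(u,v) = T(u \otimes v)$, where
\[ T\alpha(t,\cdot) := \int_0^t e^{(t-s)\Delta}\,\mathbb{P}\,\mathrm{div}\,\alpha(s,\cdot)\,ds, \]
and to prove two independent boundedness statements: pointwise multiplication $(u,v) \mapsto u \otimes v$ sends $X \times X$ into $Y$, and the linear operator $T$ sends $Y$ into $X$. Combining these two gives the theorem.

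The product step reduces to two pointwise Cauchy--Schwarz estimates. For the weighted $L^\infty$ component of the $Y$-norm,
\[ t\,|u(t,x)|\,|v(t,x)| = (t^{1/2}|u(t,x)|)(t^{1/2}|v(t,x)|) \leq \|u\|_X\|v\|_X, \]
and for the $T^{\infty,1}$ component, averaging $|uv|$ on a Carleson box and applying Cauchy--Schwarz in $L^2_{t,x}$ yields $\|u \otimes v\|_{T^{\infty,1}} \leq \|u\|_{T^{\infty,2}}\|v\|_{T^{\infty,2}}$.

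The operator step splits into the weighted $L^\infty$ bound and the $T^{\infty,2}$ bound for $T\alpha$. For the pointwise bound at $(t,x)$, I would split the time integration at $s = t/2$: on the ``near'' piece $s \in (t/2,t)$ the weighted $L^\infty$ part of $\|\alpha\|_Y$ gives $|\alpha(s,y)| \lesssim t^{-1}\|\alpha\|_Y$, which paired with $\|\nabla\sigma_{t-s}\|_{L^1_y} \lesssim (t-s)^{-1/2}$ integrates to $O(t^{-1/2})\|\alpha\|_Y$, while on the ``far'' piece $s \in (0,t/2)$ the Oseen pointwise decay \eqref{oseen} with $|\beta|=1$ combined with the $T^{\infty,1}$-Carleson bound on $\alpha$ at spatial scale $\sqrt{t}$ (decomposed into parabolic shells of radius $2^k\sqrt{t}$, $k \geq 0$, to handle the Gaussian tail) again yields $O(t^{-1/2})\|\alpha\|_Y$ after summing the geometric series in $k$. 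For the tent-space bound, fix $B = B(x_0,R)$ and decompose $\alpha = \alpha_{\mathrm{loc}} + \sum_{k \geq 1}\alpha_k$ where $\alpha_{\mathrm{loc}}$ is supported in the enlarged Carleson box $4B \times (0,16R^2)$ and $\alpha_k$ is supported in parabolic annular shells outside. The local part is handled by the de Simon theorem after rewriting the singular multiplier $\mathbb{P}\,\mathrm{div}$ in the form compatible with the maximal regularity operator $M^+$; each far piece $\alpha_k$ is controlled by the off-diagonal $L^2$-estimates for the family $(\sqrt\tau\,\mathbb{P}\,\mathrm{div}\,e^{\tau\Delta})_{\tau>0}$ which follow from \eqref{oseen} by a standard argument as in \cite{ahlmt}, generating a decay factor in $2^k$ that defeats the polynomial volume growth of the shells.

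The principal obstacle is the tent-space bound in the operator step. The de Simon theorem is a global statement on $L^2_{t,x}(\R_+^{n+1})$, whereas the $T^{\infty,2}$-norm requires a uniform Carleson-type $L^2$-control localized on each ball $B$. The Oseen off-diagonal estimates are the crucial bridge between the two: they convert the distance between $B$ and the support of $\alpha_k$ into a negative power of $2^k$ fast enough to beat the polynomial shell-volume growth. A secondary technical point is that controlling the $L^2$-average on a Carleson box genuinely needs both pieces of the $Y$-norm simultaneously, since neither the weighted $L^\infty$ part nor the $T^{\infty,1}$ Carleson part alone interpolates to a square-integrable control of the convolution $T\alpha$.
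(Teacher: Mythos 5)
Your overall skeleton matches the paper's: factor $B$ through the product map $X\times X\to Y$ and a linear operator $Y\to X$, then prove a pointwise bound $|T\alpha(t,x)|\lesssim t^{-1/2}\|\alpha\|_Y$ and a tent-space bound. The product step and the pointwise bound are essentially the paper's argument and are fine.

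The tent-space bound, however, has a genuine gap, and it sits exactly where you wrote ``after rewriting the singular multiplier $\mathbb{P}\,\mathrm{div}$ in the form compatible with the maximal regularity operator $M^+$.'' That rewriting forces you to insert $\Delta^{-1}\mathbb{P}\,\mathrm{div}$, whose symbol is of size $|\xi|^{-1}$: even after the natural normalization $e^{(t-s)\Delta}\mathbb{P}\,\mathrm{div}\,\alpha(s)=e^{(t-s)\Delta}\Delta\,\bigl(s^{-1/2}\Delta^{-1}\mathbb{P}\,\mathrm{div}\bigr)\,s^{1/2}\alpha(s)$, the middle factor has symbol $\sim s^{-1/2}|\xi|^{-1}$, unbounded on $L^2$ at frequencies $|\xi|\ll s^{-1/2}$, so de Simon plus off-diagonal estimates cannot be applied to the whole local part. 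The paper repairs this by inserting $(I-e^{2s\Delta})$, which kills the low frequencies and makes the resulting $T_s=\mathbb{P}s^{1/2}\mathrm{div}(s\Delta)^{-1}(I-e^{2s\Delta})$ uniformly bounded; this yields the term $A_1$, treated exactly as you propose. But the correction leaves a residual $\int_0^t e^{(t+s)\Delta}\mathbb{P}\,\mathrm{div}\,\alpha(s)\,ds$, split as $A_2-A_3$ with $A_2\alpha=\int_0^\infty e^{(t+s)\Delta}\mathbb{P}\,\mathrm{div}\,\alpha(s)\,ds$. The term $A_2$ is the heart of the matter: it cannot be controlled by $\|s^{1/2}\alpha\|_{T^{\infty,2}}$ or by maximal regularity at all, and it is the only place where the full Carleson ($T^{\infty,1}$) structure of $\alpha$ is used. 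The paper handles it by duality, via the chain $T^{\infty,1}\subset (T^{1,\infty})^*$, $T^{\infty,2}=(T^{1,2})^*$, and $A_2^*:T^{1,2}\to T^{1,\infty}$, the last step resting on the atomic decomposition of $T^{1,2}$, the fact that $\mathcal M a=\int_0^\infty\nabla\mathbb{P}e^{t\Delta}a(t,\cdot)\,dt$ sends $T^{1,2}$ atoms to $H^1$ molecules, and the $L^1$ bound for the non-tangential maximal function of the caloric extension of an $H^1$ function. None of this machinery appears in your outline; your closing remark that both pieces of the $Y$-norm are needed is correct, but the proposal never actually explains how the $T^{\infty,1}$ part enters the square-function estimate, and without the $A_2$ duality argument (or an equivalent energy argument as in Koch--Tataru's original paper) the tent-space bound does not close.
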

\begin{proof}
The first observation is that instead of working with the bilinear form $B$, we can consider the linear operator $A:Y\rightarrow X$ (where $Y$ is the Banach space introduced in Definition~\ref{prvoY}) defined by 
$$ A(\alpha)(t,\cdot):=\int_0^te^{(t-s)\Delta}  \mathbb P \mathrm{div} \alpha(s,\cdot)\,ds.$$
Indeed, we have $A(\alpha):=B(u,v)$ for $\alpha=u\otimes v$ and 
it is clear that $(u,v)\mapsto u\otimes v$ maps $X\times X$ to $Y$ since, 
by Cauchy-Schwarz inequality,  
$$\begin{aligned}
\|u\otimes v\|_{Y}&= \| t(u\otimes v)\|_{L^\infty_{t,x}} + \| u\otimes v\|_{T^{\infty,1}}\\
&\lesssim \| t^{1/2}u\|_{L^\infty_{t,x}} \| t^{1/2}v\|_{L^\infty_{t,x}} + \|u\|_{T^{\infty,2}}
\|v\|_{T^{\infty,2}}\\
&\lesssim \|u\|_X\|v\|_X.\end{aligned}$$
Therefore,  Theorem~\ref{zero} just stems from the following  pointwise inequality~:
\begin{equation}
\label{first}
|A(\alpha)(t,x)|\lesssim t^{-1/2}\|\alpha\|_{Y}\quad\hbox{for all }\ (t,x)\in\R^{n+1}_+,
\end{equation}
and the tent space bound~:
\begin{equation}
\label{secondd}
\|A(\alpha)\|_{T^{\infty,2}}\leq \|\alpha\|_{T^{\infty,1}}+\|t^{1/2}\alpha\|_{T^{\infty,2}},
\end{equation}
since, obviously, 
$$\|t^{1/2}\alpha\|_{T^{\infty,2}} \leq \| t\alpha \|_{L^\infty_{t,x}}^{1/2}\|\alpha\|_{T^{\infty,1}}^{1/2}
\leq \|\alpha\|_Y.$$


We shall establish the estimate~\eqref{first} (which turns out to be easier) in this section and the estimate~\eqref{secondd} will be proved in the sections 2.2, 2.3 and 2.4. Remark that the quantity $\|t^{1/2}\alpha\|_{T^{\infty,2}}$ is not used in \cite{kochtat}. 
\medbreak
In order to prove  estimate~\eqref{first},   we denote by $k_{\tau}$ the kernel of the operator $e^{\tau\Delta} \mathbb P \div$ and split the integrals in the definition of the operator $A$ as follows~:
\begin{multline}
\label{alinfty}
|A(\alpha)(t,x)|\leq \left|\int_0^t \int_{\R^n\backslash B(x,\sqrt{t})} k_{t-s}(x,y)\alpha(s,y)\, dy \,ds\right| \\
 + \left|\int_{t/2}^t \int_{B(x,\sqrt{t})} k_{t-s}(x,y)\alpha(s,y) \,dy\, ds\right| \\
+ \left|\int_0^{t/2} \int_{B(x,\sqrt{t})} k_{t-s}(x,y)\alpha(s,y) \,dy\, ds\right|\cdotp
\end{multline}
Let us denote by $I_1(t,x),$ $I_2(t,x)$ and $I_3(t,x)$   the three summands above.
As a consequence of  \eqref{oseen}, we have 
$$|k_{t-s}(x,y)|\lesssim (\sqrt{t-s}+|x-y|)^{-n-1}$$ and we shall use this estimate differently in each case. 
\smallbreak
In order to estimate $I_1(t,x),$ consider for all  $i\in \Z^n,$ the points $x_i:=x+i\sqrt{t}$ and the balls $B(x_i,\sqrt{nt})$. Note that these balls cover $\mathbb R^n$.    
Using  
$|k_{t-s}(x,y)|\lesssim |x-y|^{-n-1} \le \min((|x-y|^{-n-1}, (\sqrt t)^{-n-1})$ for  $y\in \R^n\backslash {B(x,\sqrt{t})}$ and forgetting about the range of integration, it follows that 
\begin{align}\label{alinfty1}
I_1(t,x)&\leq C \int_0^t \!\!\int_{\mathbb R^n}  \min(|x-y|^{-n-1}, (\sqrt t)^{-n-1}) |\alpha(s,y)| \,dy \,ds\nonumber\\
&\lesssim \sum_{i\in \Z^n}\frac{1}{|B(x_i,\sqrt{nt})|} \int_0^t \!\!\int_{B(x_i,\sqrt{n t})} |\alpha(s,y)| \,dy \,ds \ (nt)^{n/2}(\max(|i|-\sqrt n, 1)t^{1/2})^{-n-1}\nonumber\\
&\lesssim t^{-1/2}\|\alpha\|_{T^{\infty,1}}.
 \end{align}
  

\smallbreak
In order to bound $I_3(t,x),$ one can further observe that
 $|k_{t-s}(x,y)|\lesssim |t|^{-(n+1)/2}$ once $s\in (0,t/2)$  and $|x-y|\leq \sqrt{t}$. Hence,
 \begin{eqnarray}
\label{alinfty2}
I_3(x,t) &\!\!\!\lesssim\!\!\!&
\displaystyle \frac{t^{-1/2}}{t^{n/2}}\int_{0}^t \int_{B(x,\sqrt{t})} |\alpha(s,y)| \,dy \,ds \nonumber \\
&\!\!\!\lesssim\!\!\!& \displaystyle t^{-1/2} \|\alpha\|_{T^{\infty,1}}.
\end{eqnarray}
Finally, since we also have  $|k_{t-s}(x,y)|\lesssim |x-y|^{-n+1/2}(t-s)^{-3/4},$ 
one can bound $I_2(t,x)$ as follows~:
 \begin{eqnarray}\label{alinfty3}
I_2(t,x)
 &\!\!\!\lesssim\!\!\!& \biggl(\int_{t/2}^{t}(t-s)^{-3/4}ds\biggr)\biggl(\int_{B(x,\sqrt{t})}|x-y|^{-n+1/2}dy\biggr) \|\alpha\|_{\infty} \nonumber \\
 &\!\!\!\lesssim\!\!\!& t^{1/4}\biggl(\int_0^{\sqrt{t}}\frac{1}{r^{n-1/2}}r^{n-1} dr\biggr) \|\alpha\|_{\infty} \nonumber \\
 &\!\!\!\lesssim\!\!\!& \sqrt{t}\|\alpha\|_{\infty}.
 \end{eqnarray}
 Putting Inequalities~\eqref{alinfty1},~\eqref{alinfty2} and~\eqref{alinfty3} together 
 gives~\eqref{first}.
 \bigbreak
 In order to prove  estimate~\eqref{secondd}, we shall use  the decomposition 
 $$A(\alpha)=A_1(\alpha)+A_2(\alpha)+A_3(\alpha)$$ with 
\begin{equation}
\label{1}
A_1(\alpha)(t,\cdot):=\int_0^t e^{(t-s)\Delta} \Delta (s\Delta)^{-1}(I-e^{2s\Delta})s^{1/2}\mathbb P  \mathrm{div} s^{1/2} \alpha(s,\cdot) \,ds, 
\end{equation}

\begin{equation}
\label{2}
A_2(\alpha)(t,\cdot):=\int_0^\infty e^{(t+s)\Delta} \mathbb P  \mathrm{div} \alpha(s,\cdot) \,ds,
\end{equation}

\begin{equation}
\label{3}
A_3(\alpha)(t,\cdot):=\int_t^\infty e^{(t+s)\Delta} \mathbb P  \mathrm{div} \alpha(s,\cdot) \,ds.
\end{equation}
There are two reasons for such a decomposition. First, $A_1$ can be handled via the maximal regularity techniques. Second, the form of the integral $A_2$ will allow us to use a duality 
argument together with the fact that the Leray projector commutes with the Laplacian. This is where tools from harmonic analysis come into play. Finally, the term $A_3$ should be thought of as a remainder.


\section{Estimate of the term \texorpdfstring{$A_1$}{TEXT}}
In this section we shall bound from above the term $A_1.$ Namely, our main goal is to prove the estimate
\begin{equation}
\label{a1}
\|A_1(\alpha)\|_{T^{\infty,2}}\lesssim \|s^{1/2}\alpha\|_{T^{\infty,2}}.
\end{equation} 
First of all, note that $A_1(\alpha)(t,x)=M^+Z(s^{1/2}\alpha)(t,x),$ where $M^+$ is the maximal regularity operator defined in Definition \ref{defn:maxreg} 
and the operator $Z$ in turn is 
defined by $ZF(s,\cdot):=T_sF(s,\cdot)$ with 
$$T_sf:=\mathbb P s^{1/2}\mathrm{div} (s\Delta)^{-1}(I-e^{2s\Delta})f.$$
It is obvious that we are done once we prove that operators $M^+:T^{\infty,2}\rightarrow T^{\infty,2}$ and $Z:T^{\infty,2}\rightarrow T^{\infty,2}$ are bounded. 
\bigbreak
\textbf{Claim 1.} $M^+:T^{\infty,2}\rightarrow T^{\infty,2}$ is a bounded operator.

\begin{proof}
Let $F\in T^{\infty,2}$ and fix $x_0\in \mathbb R^n$ and $R>0.$  
Write $\mathbbm{1}_{[0,R^2]\times\R^n} \,F$ as 
$$\mathbbm{1}_{[0,R^2]\times\R^n} \,F=\sum_{j\geq 0} F_j,$$ where 
$$F_j:=F \mathbbm{1}_{[0,R^2]\times (B(x_0,2^{j+1}R)\backslash B(x_0,2^{j}R))}\ \hbox{ for }\ 
j\geq 1, \ \hbox{ and }\ F_0:=F \mathbbm{1}_{[0,R^2]\times B(x_0, 2R)}.$$ 
We first rule out $M^+F_0$ using de Simon's theorem that ensures that
\begin{equation*}
I_0^2:=\int_{[0,R^2]\times B(x_0,R)}|M^+F_0|^2\,ds\, dx  \leq C \int_{\mathbb R^{n+1}_+}|F_0|^2 
\,ds \,dx.
\end{equation*}
Next, we study the case when $j\geq 1.$  Denote
$$I_j^2:=\int_{[0,R^2]\times B(x_0,R)}|M^+F_j|^2 \,ds \,dx.$$ 
Note that $M^+F_j(t,x)=\sum_{k\geq 1} F_{j,k}(t,x),$ where, for $k\geq 1,$
$$F_{j,k}(t,x)= \int_{t/2^k}^{t/2^{k-1}}e^{(t-s)\Delta}\Delta F_j(s,x) \,ds.$$
 {}From  the triangle inequality, we infer the estimate $$I_j\leq \sum_{k\geq 1}\|F_{j,k}\|_{L^2([0,R^2]\times B(x_0,R))},$$ and hence it suffices to bound from above the $L^2$ norms of the functions $F_{j,k}.$ Note that for $k\geq 2$ one has
$$\begin{aligned}
\|F_{j,k}\|^2_{L^2([0,R^2]\times B(x_0,R))}&=\int_0^{R^2}\int_{B(x_0,R)}\Bigl|\int_{t/2^k}^{t/2^{k-1}}(t-s)e^{(t-s)\Delta}\Delta F_j(s,x)\frac{ds}{t-s}\Bigr|^2\,dx \,dt \\
&\leq\int_0^{R^2}\frac{2^{-k}t}{t^2}\int_{t/2^k}^{t/2^{k-1}}\int_{B(x_0,R)}\Bigl|(t-s)e^{(t-s)\Delta}\Delta F_j(s,x)\Bigr|^2\,dx \,ds\, dt \\
&\leq\int_0^{R^2}\int_{t/2^k}^{t/2^{k-1}}2^{-k}t \left(1+\frac{(2^{j}R)^2}{t-s}\right)^{-2M} \|F_j(s,\cdot)\|^2_{L^2(\R^n)} \,ds \,dt, \end{aligned}
$$
where in the first inequality we used the Cauchy--Schwarz inequality for the integral with respect to $s$ and the Fubini theorem to exchange the integrals in $s$ and in $x$. In the second one we used the off-diagonal estimates for the family $(t-s) e^{(t-s)\Delta}\Delta,$ see Definition~\ref{offdiag}. Note that here we can take $M$ as big as we want. 
\smallbreak
We continue the estimate, now exchanging the integrals in $s$ and in $t$ and also using the fact that $t\lesssim t-s$ for $s$ and $t$ as in the integrals above, getting
$$\begin{aligned}
\|F_{j,k}\|^2_{L^2([0,R^2]\times B(x_0,R))}&\leq \int_0^{R^2/2^{k-1}} \|F_j(s,\cdot)\|^2_{L^2(\R^n)} \int_{s2^{k-1}}^{s2^{k}}2^{-k}t \left(1+\frac{(2^{j}R)^2}{t}\right)^{-2M} \,dt\, ds \\
&\leq 2^{-4Mj} 2^{-k} \|F_j\|^2_{L^2([0,R^2]\times\R^n)}.
\end{aligned}$$
In the case $k=1,$ one can get the  same estimate on the norm 
$\|F_{j,1}\|^2_{L^2([0,R^2]\times B(x_0,R))}$ in an almost identical way as above. Indeed we first obtain the following estimate 
$$\begin{aligned}
\|F_{j,1}\|^2_{L^2([0,R^2]\times B(x_0,R))}&=\int_0^{R^2}\int_{B(x_0,R)}\Bigl|\int_{t/2}^{t}(t-s)e^{(t-s)\Delta}\Delta F_j(s,x)\frac{ds}{t-s}\Bigr|^2\,dx \,dt \\
&\leq\int_0^{R^2}\int_{t/2}^{t}\frac{t}{(t-s)^2}\int_{B(x_0,R)}\Bigl|(t-s)e^{(t-s)\Delta}\Delta F_j(s,x)\Bigr|^2\,dx \,ds\, dt \\
&\leq\int_0^{R^2}\int_{t/2}^{t}\frac{t}{(t-s)^2} \left(1+\frac{(2^{j}R)^2}{t-s}\right)^{-2M} \|F_j(s,\cdot)\|^2_{L^2(\R^n)} \,ds \,dt, \end{aligned}
$$
thanks to the fact that the $L^2$ norm of an integral is bounded from above by the integral of its $L^2$ norm, to the Fubini theorem and to the off-diagonal estimates. Hence, referring to the fact that in this case $t\lesssim s$ we find that
$$\begin{aligned}
\|F_{j,1}\|^2_{L^2([0,R^2]\times B(x_0,R))}&\leq \int_0^{R^2} \|F_j(s,\cdot)\|^2_{L^2(\R^n)} \int_{s}^{2s}\frac{R^2}{(t-s)^2}\left(1+\frac{(2^{j}R)^2}{t-s}\right)^{-2M} \,dt\, ds \\
&\lesssim 2^{-4Mj} \|F_j\|^2_{L^2([0,R^2]\times\R^n)}.
\end{aligned}$$

As a consequence we get for all integer $M$  that $I_j \lesssim 2^{-2Mj} \|F_j\|_{L^2([0,R^2]\times\R^n)}$.  Taking $M>n/4,$ one can thus conclude that~:
$$\begin{aligned}
\biggl(\int_{[0,R^2]\times B(x_0,R)}|M^+F|^2 \,ds\,dx\biggr)^{1/2}&\leq \sum_j I_j \\
&\lesssim R^{n/2}\|F\|_{T^{\infty,2}}\sum_j 2^{(n/2-2M)j} \\
&\lesssim R^{n/2}\|F\|_{T^{\infty,2}},
\end{aligned}$$
which completes the proof of our  claim.
\end{proof}
\bigbreak
\textbf{Claim 2.} $Z:T^{\infty,2}\rightarrow T^{\infty,2}$ is a bounded operator. 
\begin{proof}
Note that $T_sF$ is an integral operator and denote by $\kappa_s$ its kernel. 
In order to achieve our goal,  we need the following  estimate on the function $\kappa_s$ 
(the proof of  which is similar to that of~\eqref{oseen})~:
\begin{lemma} There exists $C>0$ such that for all $(s,x) \in \mathbb R^{n+1}_+$ with $ |x|\geq s^{1/2},$ we have
$$|\kappa_s(x)|\leq Cs^{-n/2}\left(\frac{|x|}{s^{1/2}}\right)^{-n-1}.$$
\end{lemma}
Now we can easily prove the following important property of the family $T_s,$ which will be used in a moment.
\begin{lemma}
\label{linftyltwo}
Let $F_j$ be as above and let $s>0$. Then,
$$\|T_sF_j(s,\cdot)\|_{L^\infty(B(x_0,R))} \lesssim s^{-n/4}\left(\frac{s^{1/2}}{2^jR}\right)^{n/2+1}\|F_j(s,\cdot)\|_{L^2(\mathbb R^n)}.$$
\end{lemma}

\begin{proof}
If $x\in B(x_0,R),$ then we deduce using the previous lemma that
\begin{multline*}
\Bigl|\int_{\mathbb R^n}\kappa_s(x-y)F_j(s,y)\,dy\Bigr|\lesssim s^{-n/2}\biggl(\frac{2^jR}{s^{1/2}}\biggr)^{-n-1}\int_{B(x_0,2^{j+1}R)\backslash B(x_0,2^{j}R)}|F(s,y)|\,dy\\
\lesssim s^{-n/2}\left(\frac{2^jR}{s^{1/2}}\right)^{-n-1}(2^jR)^{n/2}\left(\int_{B(x_0,2^{j+1}R)\backslash B(x_0,2^{j}R)} |F(s,y)|^2\,dy\right)^{1/2},
\end{multline*}
where in the second estimate we have used the H\"older inequality. Hence Lemma~\ref{linftyltwo} follows. 
\end{proof}

As in the previous claim, we first concentrate on the function $F_0$. We use the fact that the operators $T_s$ are uniformly bounded in $L^2$ with respect to  $s$ so as to write: 
\begin{eqnarray}
\label{ZF0}
\int_{[0,R^2]\times B(x_0,R)} |ZF_0|^2 \,ds\,dx&\!\!\!\leq\!\!\!& \int_{[0,R^2]\times\mathbb R^n} |T_sF_0|^2 \,ds\,dx\nonumber \\
&\!\!\!\leq\!\!\!& C \int_{[0,R^2]\times \mathbb R^n} |F_0|^2 \,ds\,dx
=C \int_{[0,R^2]\times B(x_0,R)} |F|^2 \,ds\,dx\qquad\nonumber\\&\!\!\! \leq\!\!\!& C \|F\|^2_{T^{\infty,2}} |B(x_0,R)|.
\end{eqnarray}

Next we turn to the off-diagonal terms, i.e. we consider indices $j\in \mathbb N$ such that $j\geq 1.$ Fix $s>0$. The H\"older inequality and Lemma~\ref{linftyltwo} imply that
\begin{eqnarray}
\|T_sF_j(s,\cdot)\|_{L^2(B(x_0,R))}&\!\!\!\leq\!\!\!& R^{n/2}\|T_sF_j(s,\cdot)\|_{L^\infty(B(x_0,R))}
\nonumber \\
&\!\!\!\leq\!\!\!& C R^{n/2}s^{-n/4}\left(\frac{s^{1/2}}{2^jR}\right)^{n/2+1}\|F_j(s,\cdot)\|_{L^2(\mathbb R^n)}.
\end{eqnarray}
Hence, from  the fact that $s^{1/2}\leq R,$ we infer the estimates
\begin{eqnarray}
\label{TsFj}
\frac{1}{R^n}\int_{(0,R^2)\times B(x_0,R))}|T_s F_j|^2 \,ds\,dx 
&\!\!\!\leq\!\!\!& \frac{2^{-2j}}{(2^jR)^n} \int_{(0,R^2)\times\left(B(x_0,2^{j+1}R)\backslash B(x_0,2^{j}R)\right)}|T_s F_j|^2 \,ds\,dx\nonumber \\
&\!\!\!\lesssim\!\!\!& 2^{-2j} \frac{1}{|B(x_0,2^{j+1}R)|} 
\int_{(0,2^{j+1}R^2)\times B(x_0,2^{j+1}R)}|F|^2 \,ds\, dx\quad \nonumber \\
&\!\!\!\lesssim\!\!\!& 2^{-2j} \|F\|^2_{T^{\infty,2}}.
\end{eqnarray}
Putting he lines~\eqref{ZF0} and~\eqref{TsFj} together  completes  the proof the second claim.
\end{proof}

As pointed out above, estimate~\eqref{a1} follows obviously from the first and the second claims.


\section{Estimate of the term \texorpdfstring{$A_3$}{TEXT}}

Our next goal is to prove 
\begin{equation}
\label{a3}
\|A_3(\alpha)\|_{T^{\infty,2}}\lesssim \|s^{1/2}\alpha\|_{T^{\infty,2}}.
\end{equation} 

Recall  that the operator $A_3$ is defined by
$$A_3\alpha(t,\cdot)=\int_t^\infty e^{(t+s)\Delta} \mathbb P \div \alpha(s,\cdot)\,ds.$$
Observe that $A_3(\alpha)=\mathcal R(s^{1/2}\alpha)$, where 
$$(\mathcal R F)(t,\cdot):=\int_t^\infty K_{t,s}F(s,\cdot)\,ds,$$
and the operator $K_{t,s}$ is defined by $K_{t,s}:=e^{(t+s)\Delta}\mathbb P s^{-1/2} \div$ for $s,t>0.$
\smallbreak
Note that  $K_{t,s}$ is a kernel operator for all $s,t>0,$ with   kernel  $k_{t,s}$ satisfying,  according to~\eqref{oseen},
\begin{equation}
\label{yadroT_s}
|k_{t,s}(x)|\leq C s^{-1/2}(t+s)^{-1/2-n/2}(1+(t+s)^{-1/2}|x|)^{-n-1},
\end{equation}
for all $x\in \R^n$ and $s,t>0.$
\medbreak
We shall first prove that $\mathcal R$ is a bounded operator on the space $L^2(\R^{n+1}_+).$ This follows from 
\begin{equation}\label{yadroT_s2}
\|K_{t,s}\|_{L^2(\R^n)\rightarrow L^2(\R^n)}\leq Cs^{-1/2}(t+s)^{-1/2},\end{equation} which, in turn,  is an easy consequence of the estimate~\eqref{yadroT_s}. 
\smallbreak
In order to prove the boundedness of ${\mathcal R},$  pick some $\beta\in (-1/2,0),$ set $p(t)=t^{\beta}$ and observe that the function $k(t,s):=\|K_{t,s}\|_{L^2\rightarrow L^2}\mathbbm{1}_{(t,\infty)}(s)$ satisfies
$$\begin{aligned}\int_0^\infty k(t,s)p(t) \,dt &\lesssim \int_0^s s^{-1/2} t^{-1/2} t^{\beta} \,dt \lesssim s^{\beta}=p(s), &\text{ for all } s>0,\\
\int_0^\infty k(t,s)p(s) \,ds &\lesssim \int_t^\infty s^{-1/2} s^{-1/2} s^{\beta} \,ds \lesssim t^{\beta}=p(t), &\text{ for all } t>0.\end{aligned}$$ 
The desired $L^2(\R^{n+1}_+)$ boundedness now follows since,  applying the Minkowski inequality and the Schur test, we have :
$$\begin{aligned}\|\mathcal R F\|^2_{L^2(\R^{n+1}_+)}&=\int_{\R_+}\int_{\R^n}\left|\int_t^\infty K_{t,s}F(s,x)\,ds\right|^2 \,dx \,dt  \\ &\leq \int_{\R_+}  \biggl(\int_{t}^\infty \Bigl(\int_{\R^n} |K_{t,s}F(s,x)|^2\,dx\Bigr)^{1/2}\,ds\biggr)^2 dt  \\ &\lesssim \int_{\R_+}\biggl(\int_{\R_+} k(t,s) \|F(s,\cdot)\|_{L^2(\R^n)} \,ds\biggr)^2 dt\\
 &\lesssim \|F\|^2_{L^2(\R^{n+1}_+)}. \end{aligned}$$

Next, we concentrate on the boundedness of $\mathcal R$ on $T^{\infty,2}.$
As a first, observe that Inequality  \eqref{yadroT_s}   readily implies the following $L^2-L^\infty$ off-diagonal estimate
for all disjoint Borel sets $E, \tilde{E} \subseteq \R^n$ and $s,t>0$~:
\begin{equation}
\label{ltwolinfty}
\|\mathbbm{1}_E K_{t,s} \mathbbm{1}_{\tilde{E}}\|_{L^2(\R^n)\rightarrow L^\infty(\R^n)}\!\lesssim\!  s^{-1/2}(t\!+\!s)^{-1/2-n/4}\left(1+(t\!+\!s)^{-1/2}\dist(E,\tilde{E})\right)^{-n/2-1}.
\end{equation}
Let $F\in T^{\infty,2}$ and fix $(R,x_0)\in \R^{n+1}_+.$ Define $B_j:=(0,2^jR^2)\times B(x_0,2^jR)$ for $j\geq 0$ and $C_j:=B_j\backslash B_{j-1}$ for $j\geq 1$. Then, set $F_0:=\mathbbm{1}_{B_0} F$ and  $F_j:=\mathbbm{1}_{C_j}F$ for $j\geq 1$. Using the Minkowski inequality we deduce that
\begin{multline*}
\biggl(R^{-n}\int_0^{R^2}\|(\mathcal RF)(t,\cdot)\|^2_{L^2(B(x_0,R))}\,dt\biggr)^{1/2}\\
\lesssim \sum_{j\geq 0} \biggl(R^{-n}\int_0^{R^2} \|(\mathcal RF_j)(t,\cdot)\|^2_{L^2(B(x_0,R))}\,dt\biggr)^{1/2}=:\sum_{j\geq 0} I_j.
\end{multline*}
For a natural number $j$ such that $j\leq 2,$ the boundedness of $\mathcal R$ on $L^2$ yields the estimate $I_j\lesssim\|F\|_{T^{\infty,2}}.$ For $j\geq 3$, split $C_j$ as follows~:
\begin{multline*}
C_j=(0,2^{j-1}R^2)\times (B(x_0,2^jR)\backslash B(x_0,2^{j-1}R)) \\
\cup (2^{j-1}R^2,2^jR^2)\times B(x_0,2^jR) =:C_j^{(0)}\cup C_j^{(1)}.
\end{multline*}
 Denote $F_j^{(0)}:=\mathbbm{1}_{C_j^{(0)}}F$ and $F_j^{(1)}:=\mathbbm{1}_{C_j^{(1)}}F$ and, correspondingly, $I_j^{(0)}$ and $I_j^{(1)}$.

For $I_j^{(0)}$, split the integral in $s$ and use the H\"older inequality to obtain
$$I_j^{(0)}\lesssim \sum_{k\geq 0}\biggl(R^{-n}\int_0^{R^2} \int_{2^kt}^{2^{k+1}t} 2^kt \|K_{t,s}F_j^{(0)}(s,\cdot)\|^2_{L^2(B(x_0,R))}\,ds\, dt\biggr)^{1/2}.$$
Now observe that for  $j\geq 3, k\geq 0, t\in (0,R^2)$ and $s\in(2^kt,2^{k+1}t),$ H\"older's inequality and the $L^2-L^{\infty}$ off-diagonal estimate~\eqref{ltwolinfty} above yield for any $\delta\in (0,1]$:
$$\begin{aligned}
\|K_{t,s}F_j^{(0)}(s,\cdot)&\|_{L^2(B(x_0,R))} \lesssim R^{n/2} \|K_{t,s}F_j^{(0)}(s,\cdot)\|_{L^\infty(B(x_0,R))} \\
&\lesssim R^{n/2}s^{-1/2} (t\!+\!s)^{-1/2-n/4} \biggl(1+\frac{2^{j-1}R-R}{(t\!+\!s)^{1/2}}\biggr)^{-n/2+\delta}\|F_j(s,\cdot)\|_{L^2}\\
&\lesssim (2^j)^{-n/4-\delta/2}R^{-\delta}(2^kt)^{-1+\delta/2}\|F_j(s,\cdot)\|_{L^2}.
\end{aligned}$$
Combining this estimate with the previous one, interchanging the order of integration and choosing $\delta<1$ gives
$$\begin{aligned}
\sum_{j\geq 1}I_j^{(0)} &\lesssim \sum_{j\geq 1}\sum_{k\geq 0} 2^{-j\delta/2-k(1/2-\delta/2)} \biggl((2^jR^2)^{-n/2} \int_0^{2^jR^2} \|F_j(s,\cdot)\|^2_{L^2}\,ds\biggr)^{1/2} \\ &\lesssim \|F\|_{T^{\infty,2}}.
\end{aligned}$$

For $I_j^{(1)}$ it is enough to use the $L^2-L^\infty$ bound for the operator $K_{t,s}$ instead of the off-diagonal estimates. For $s\in (2^{j-1}R^2,2^{j}R^2)$ and $0<t< R^2$ one thus obtains
$$\begin{aligned}
\|K_{t,s}F_j^{(1)}(s,\cdot)\|^2_{L^2(B(x_0,R))} &\lesssim R^{n/2} \|K_{t,s}F_j^{(1)}(s,\cdot)\|^2_{L^\infty(B(x_0,R))} \\
&\lesssim R^{n/2}s^{-1/2} (t+s)^{-1/2-n/4} \|F_j(s,\cdot)\|_{L^2}\\
&\lesssim (2^j)^{-n/4}(2^j R^2)^{-1}\|F_j(s,\cdot)\|_{L^2}.
\end{aligned}$$
Plugging this into $I_j^{(1)}$ yields 
$$\begin{aligned}
I_j^{(1)}&\lesssim  \biggl(R^{-n}\int_0^{R^2} \int_{2^jt}^{2^{j+1}t} 2^jR^2 \|K_{t,s}F_j^{(1)}(s,\cdot)\|^2_{L^2(B(x_0,R))\,}ds \,dt\biggr)^{1/2}\\
&\lesssim \biggl((2^jR^2)^{-n/2} \int_0^{2^jR^2} (2^jR^2)^{-1/2}R\|F_j(s,\cdot)\|^2_{L^2}\,ds\biggr)^{1/2}\\&\lesssim 2^{-j/2}\|F\|_{T^{\infty,2}}.
\end{aligned}$$
Summing up over $j$ gives $\|\mathcal R F\|_{T^{\infty,2}}\lesssim \|F\|_{T^{\infty,2}},$ whence \eqref{a3} is proved.


\section{Estimate of the term  \texorpdfstring{$A_2$}{TEXT}}

We now wish to prove 
\begin{equation}
\label{a2}
\|A_2(\alpha)\|_{T^{\infty,2}}\lesssim \|\alpha\|_{T^{\infty,1}}.
\end{equation}

This is where we need tools coming from harmonic analysis.  Let us first introduce  some terminology that is very important in this section.
\begin{defin}
We shall say that a continuous function $u:\mathbb R_+^{n+1}\to \mathbb C$ belongs to the tent space $T^{1,\infty}$ if
$\|u\|_{T^{1,\infty}}:=\|N(u)\|_{L^1(\mathbb R^{n})}<\infty,$
where
 $$N(u)(x):=\sup_{\{(t,y) : y\in B(x,\sqrt{t})\}}|u(t,y)|,$$ is the parabolic non-tangential maximal function  and $$\lim_{t\to 0, \,  y\in B(x,\sqrt{t})  }u(t,y)\ 
\text{exists for almost every} \ x\in \mathbb R^n.$$ 
\end{defin}
\begin{defin}
We shall say that a measurable function $u$ belongs to the tent space~$T^{1,2}$ if
$\|u\|_{T^{1,2}}:=\|S(u)\|_{L^1(\mathbb R^{n})}<\infty,$
where $$S(u)(x):=\left(\iint_{\{(t,y):y\in B(x,\sqrt{t})\}}|u(t,y)|^2\,\frac{dy\, dt}{t^{n/2}}\right)^{1/2}$$
is the parabolic square function.
\end{defin}
\begin{defin}
Let $x\in \mathbb R^n.$ The parabolic cone $\Gamma(x)$ 
with vertex $x$ is defined by $$\Gamma(x):=\bigl\{(t,y): y\in B(x,\sqrt{t})\bigr\}\cdotp$$ 
\end{defin}
\begin{defin}
Let $O\subset \mathbb R^n$ be an open set. The tent $\widehat{O}$ (also called the parabolic tent above the set $O$) is defined by $\widehat{O}:=\{(t,y): \mathrm{dist}(y,O^c)\geq \sqrt{t}\}$
(the tent above a ball is pictured in red in the figure just below):  
\end{defin}
\begin{tikzpicture}
\draw[help lines, color=gray!90, dashed] (-1,-1) grid (5.9,2.9);
\draw[->,thick] (-1,0)--(6,0) node[right]{$(x,\R^1)$};
\draw[->,thick] (0,-1)--(0,3) node[above]{$(t,\R_+)$};
\filldraw[fill=red!90!white, draw=red!90!black]
    (1,0) arc (270:360:2cm) -- (3,2) arc (180:270:2cm) -- (1,0) ;
\end{tikzpicture}
  \begin{defin}
A Borel measure $\mu$ on $\mathbb R^{n+1}_+$ is called a Carleson measure if $$\sup_{B \subset \mathbb R^n -\text{open ball}} \frac{\mu(\widehat{B})}{|B|}<\infty.$$
\end{defin}
\noindent
Let $B(x_0,r)\subset \R^n$ be an open ball. Note that $$ \Bigl[0,\left(\frac{r}{2}\right)^2\Bigr]\times B\left(x_0,\frac{r}{2}\right)  \subset\widehat{B(x_0,r)}\subset [0,r^2]\times B(x_0,r). $$
Thus (recall Definition~\ref{palatprvo}) $F\in T^{\infty,1}$ if and only if $|F| \,dy \,dt$ is a Carleson measure.
\medbreak
Introduce for $x\in \mathbb R^n$ the following important function: $$\mathcal C(\mu)(x):=\sup_{x\in B, B \\-\text{an open ball}}\frac{\mu(\widehat{B})}{|B|}$$ and the corresponding norm $\|\mu\|_{\mathcal C}:=\|\mathcal C(\mu)\|_\infty.$ We shall further write $\mathcal C(F)$ instead of $\mathcal C(|F|\,dy\, dt)$ in the case where $F$ is a function.
 \medbreak
The strategy to  prove the boundedness  $A_2: T^{\infty,1} \rightarrow T^{\infty,2}$ comprises three main steps.
The first one is the embedding property $T^{\infty,1}\subset (T^{1,\infty})^*$, the second one is the fact that $T^{\infty,2}= (T^{1,2})^*$ and the third one is that $A_2^*: T^{1,2}\rightarrow T^{1,\infty},$ where $A_2^*(G)(s,\cdot):=e^{s\Delta}\int_0^\infty \nabla \mathbb P e^{t\Delta} G(t,\cdot)\, dt.$ Indeed, suppose that these steps are proved. Then, for $F\in T^{\infty,1}$  and $G\in  T^{1,2}$,  using
$$
  \int_{\R^{n+1}_+} A_2(F)  \overline{G} \,dy \,dt = \int_{\R^{n+1}_+} F \overline{ A_2^*(G)} \,dy \,dt
  $$
  we deduce that 
  $$\|A_2(F)\|_{T^{\infty,2}} \le \sup_{\|G\|_{T^{1,2}}=1} \|F\|_{T^{\infty,1}} \|A_2^*(G)\|_{T^{1,\infty}} \lesssim   \|F\|_{T^{\infty,1}}.
  $$
\medbreak
\textbf{Step 1.}   The  main  idea here is to use the Carleson embedding:
\begin{equation}
\label{carlemb}
\int_{\R^{n+1}_+} |H|\,d\mu \leq C_n \int_{\R^n} N(H)(x) C(\mu)(x) \,dx,
\end{equation}
whenever $H\in T^{1,\infty}$ and $\mu$ is a Carleson measure for some constant $C_n$ depending only on the dimension $n$ of the ambient space. 
\smallbreak
Let us explain how the first step follows from~\eqref{carlemb}. Indeed, if $F\in T^{\infty,1}$ and $d\mu:=|F|\,dy \,dt,$ then the mapping  $H\mapsto \int H F \,dy \,dt$
is a bounded functional on  $T^{1,\infty}.$  
\smallbreak
So, let us  focus on the proof of inequality~\eqref{carlemb}. Since $N(H)$ is lower semi-continuous with  $N(H)\in L^1(\R^{n}),$ for $\lambda>0$ the set $O_\lambda=\{x\in \R^n : N(H)(x)>\lambda\}$ is open with $|O_\lambda|<\infty$ (and hence $O_\lambda\subsetneq \R^n$). Consider the Whitney decomposition of the set $O_\lambda$: there exists a constant $c_n\in (0,1)$ and a sequence $(B_j)_j$ of open balls contained in $O_\lambda$ such that $O_\lambda \subset \bigcup B_j, 4B_j \not\subset O_\lambda$ and the balls~$c_n B_j$ are mutually disjoint.  We want an estimate on the value $\mu(\{(t,y):|H(t,y)|>\lambda\}).$ Note that $|H(t,y)|>\lambda \Rightarrow B(y,\sqrt{t})\subset O_\lambda \Rightarrow (t,y)\subset \widehat{O_\lambda}.$ Hence,
$$\mu(\{(t,y):|H(t,y)|>\lambda\})\leq \mu(\widehat{O_\lambda}).$$
Since $(t,y)\in \widehat{O_\lambda},$ there exists $i$ such that $y\in B_i(=:B(x,r_i))$ and there exists $z\not\in O_\lambda$ such that $z\in 4B_i.$ It follows that
$$\sqrt{t}\leq \dist(y,O_\lambda^c)\leq |y-z|\leq |y-x_i|+|x_i-z|\leq r_i+4r_i=5r_i\leq \dist(y,\bigcup_i 6B_i).$$
Hence, $(t,y)\in \widehat{6B_i}.$ Now, we see that there holds
$$\begin{aligned}
\mu(\widehat{O_\lambda})\leq \sum_i \mu(\widehat{6B_i}) &\leq \sum_i|6B_i|
\inf_{x\in 6B_i} \mathcal C(\mu)(x) \\
&\leq \left(\frac{6}{c_n}\right)^n \sum_i|c_nB_i|\inf_{x\in c_nB_i}\mathcal C(\mu)(x)\\
&\leq \left(\frac{6}{c_n}\right)^n\int_{\bigcup_i c_n B_i}\mathcal C(\mu)(x)\, dx \\
&\leq \left(\frac{6}{c_n}\right)^n \int_{O_\lambda} \mathcal C(\mu)(x) \,dx.
\end{aligned}
$$
This allows us to complete  the proof of the estimate~\eqref{carlemb}  by integrating in $\lambda>0$ and  thus that 
of  the first step.
\bigbreak
\textbf{Step 2.} The  key to  the proof of the embedding $T^{\infty,2}\subset (T^{1,2})^*$  is the following estimate~:
\begin{equation}
\label{second}
\int_{\R^{n+1}_+} |F||G| \,dy \,dt\leq C \int_{\R^n} \mathcal C_2(F)(x) S(G)(x) \,dx,
\end{equation}
where $\mathcal C_2(F):=(\mathcal C(|F|^2))^{1/2}$, for all  $F\in T^{\infty,2}$ and $G\in T^{1,2}$. Indeed, the inequality~\eqref{second} obviously yields $$\left|\int_{\R^{n+1}_+} FG \,dy \,dt\right|\lesssim \|C_2(F)\|_{L^\infty(\R^n)}  \|S(G)\|_{L^1(\R^n)}= \|F\|_{T^{\infty,2}}  \|G\|_{T^{1,2}},$$
which realizes  $F$ as a bounded linear functional on the space $T^{1,2}.$
\begin{remark}
Note that an easier inequality 
\begin{equation*}
\int_{\R^{n+1}_+} |F||G| \,dy \,dt\leq  \int_{\R^n} S(F)(x) S(G)(x)\, dx
\end{equation*}
follows from the Cauchy--Schwarz inequality. Alas, $S(F)$ is not comparable to $\mathcal C_2(F)$ when $F\in T^{\infty,2}.$
\end{remark}
 
In order to prove~\eqref{second} we shall use a stopping time argument. To this end, we recall a couple of definitions.
\begin{defin}
Let $h>0$ and let $x\in \R^n.$ The truncated cone $\Gamma_h(x)$ is defined by 
$$\Gamma_h(x):=\bigl\{(t,y)\in \mathbb R^{n+1}_+: y\in B(x,\sqrt{t}) \text{ and } \sqrt{t}\leq h\bigr\}\cdotp$$ The corresponding average is given by
$$S_h(G)(x):=\left(\int_{\Gamma_h(x)}|G|^2\, \frac{dy\, dt}{t^{n/2}}\right)^{1/2}.$$
\end{defin} 
We also need to define the function $\mathfrak h$ on $\R^n$ by the formula $$\mathfrak h(x):=\sup\{h>0: S_h(F)(x)\leq \nu \mathcal C_2 (F)(x)\},$$ where $\nu$ is a large universal constant to be disclosed in a moment ($\nu=3^n\cdot100$ will work). 
\medbreak
Let $\phi : \mathbb R^{n+1}_+\rightarrow \mathbb R$ be a nonnegative function. The Fubini theorem yields the formula
\begin{equation}
\label{fubini}
I:=\int_{\R^n}\biggl(\int_{\Gamma_{\mathfrak h(x)}(x)}\phi(t,y) dy dt\biggr)dx=\int_{\R^{n+1}_+}\phi(t,y)\biggl(\int_{x\in B(y,\sqrt{t}), \sqrt t\le \mathfrak h(x) } \,dx\biggr)dy\,dt.
\end{equation}
The function $\phi$ will be chosen in a moment. Denote until the end of this step $B:=B(y,\sqrt{t}).$
Consider the set $E:=\{x\in B(y,\sqrt{t}): \mathfrak h(x)\geq \sqrt{t}\}.$  Suppose that we know that $|E|>c|B|$ for some constant $c\in (0,1)$ independent of $y$ and $t$. How to conclude then~? Indeed, if it is the case, then choose $\phi:=|F||G|/\sqrt{t^n}$ and note that~\eqref{fubini} gives $I\geq c\int_{\R^{n+1}_+} \phi(t,y)\sqrt{t^n}\,dy \,dt,$  whence
$$\begin{aligned}
\int_{\R^{n+1}_+}|F||G| \,dy \,dt &\leq \frac{1}{c} \int_{\R^n} \left(\int_{\Gamma_{\mathfrak h(x)}(x)}|F||G|\,\frac{dy\, dt}{t^{n/2}}\right)dx \\
&\leq \frac{1}{c} \int_{\R^n}S_{\mathfrak h(x)}(F)(x)S_{\mathfrak h(x)}(G)(x)\,dx\\ &\leq \frac{\nu}{c} \int_{\R^n} \mathcal C_2(F)(x)S(G)(x) \,dx.
\end{aligned}$$
It is left to show that $|E|>c|B|$ for some 
$c\in (0,1).$ It is sufficient to prove that $|B\backslash E|\leq (1-c)|B|.$  Take a point $x\in B\setminus E.$ As $x\in E^c$,  $\tau:=\sqrt{t}> \mathfrak h(x)$ and $S_{\tau}(F)(x)> \nu \mathcal C_2(F)(x).$ On the other hand,
$$\begin{aligned}
\frac{1}{|B|}\int_B S_\tau^2(F)(x)\, dx &\leq \frac{1}{|B|}\int_B \int_{\Gamma_\tau(x)} |F|^2\,\frac{dy\, dt}{t^{n/2}} \,dx \\
&\leq \frac{1}{|B|}\int_{\widehat{3B}} |F|^2 \,dy \,dt\\
& \leq\frac{|3B|}{|B|} \inf_{x\in 3B} \mathcal C(|F|^2)(x)\\
&\leq {3^n} \inf_{x\in B\backslash E} \mathcal C(|F|^2)(x)\\&\leq \frac{3^n}{\nu\cdot|B\backslash E|}\int_{B\setminus E} S_\tau^2(F)(x) \,dx
\\& \leq \frac{3^n}{\nu\cdot|B\backslash E|}\int_B S_\tau^2(F)(x) \,dx.
\end{aligned}$$
Hence, $\nu \cdot |B\backslash E| \leq 3^n |B|$ and with the choice $\nu=3^n\cdot100$ revealed above, one can take $c=0.99.$
\medbreak
Now, why $T^{\infty,2}\supset (T^{1,2})^*$ ? In order to prove this, we shall need one important type of $T^{1,2}$ functions, called atoms, that are  introduced in the following definition.

\begin{defin}
We say that a function  $a\in T^{1,2}$ is an atom if there exists  a ball $B\subset \R^n$ such that $\mathrm{supp}(a)\subset \widehat{B}$ and $(\int_{\widehat{B}}|a|^2 \,dy\,dt)^{1/2}\leq |B|^{-1/2}.$
\end{defin}

\begin{remark}
One of the main advantages of $T^{1,2}$ atoms is that they are compactly supported. More than that, the $L^2$ norm of an atom is controlled by the size of its support. We shall soon see that any function in the space $T^{1,2}$ admits a representation in terms of  an infinite converging linear combination of $T^{1,2}$ atoms.
\end{remark}

\begin{remark}
Observe that if $a$ is a $T^{1,2}$ atom, then $\|a\|_{T^{1,2}}\lesssim 1.$
\end{remark}
\noindent
So, first of all, note that if a function $f$ is supported in some compact set $K\subset \R^{n+1}_+$ and if $f\in L^2(K)$, then also $f\in T^{1,2}$ with $\|f\|_{T^{1,2}}\leq C(K)\|f\|_{L^2(K)}$. Suppose further that $\ell$ is a bounded linear functional on $T^{1,2}.$ Hence we see that $\ell$ is also a linear functional on $L^2(K).$ As a consequence of the Riesz representation theorem, we deduce that there exists a function $g_K\in L^2(K)$ such that $
\ell(f)$ is representable by $\int_K  fg_K\,dtdy$. Taking an exhaustive sequence of compacts, we obtain a function $g\in L^2_{\mathrm{loc}}(\R^{n+1}_+)$ such that $\ell(f)=\int_{\R^{n+1}_+} fg\,dtdy$ once $f\in T^{1,2}$ and $f$ has a compact support. Next, if we test the functional $\ell$ against all atoms supported in a  tent $\widehat B$ 
then we obtain $\|\ell\|^2\geq 1/|B|\int_{\widehat B} |g|^2 \, dtdy$, which proves the assertion. This representation is extendable to all functions in $T^{1,2}$ since compactly supported functions are dense in $T^{1,2}$.
This proves $T^{\infty,2}\supset (T^{1,2})^*.$ 

\medbreak
\textbf{Step 3.} To complete  the proof, it is left to show that the adjoint operator 
$$A_2^*(G)(s,\cdot)=e^{s\Delta}\int_0^\infty \nabla \mathbb{P} e^{t\Delta} G(t,\cdot) \,dt$$ 
acts from the space $T^{1,2}$ to the space $T^{1,\infty}.$
We shall use here the atomic decomposition of the space $T^{1,2}$ that follows.

\begin{lemma}
\label{atomrazl}
For any function $G\in T^{1,2}$ there exist  atoms $a_j$ and 
numbers $\lambda_j\in \mathbb C$ such that $\sum_j |\lambda_j|<\infty,$ 
satisfying $G=\sum_j \lambda_j a_j.$ On top of that, it holds that
$$\sum_j  |\lambda_j|\lesssim \|G\|_{T^{1,2}}\lesssim \sum_j |\lambda_j|.$$ 
\end{lemma}
\begin{proof}
Let us sketch the proof of this decomposition. Let $k$ be a natural number and denote $O_k:=\{x\in \R^n: S(G)(x)>2^k\}$ the corresponding level set of the function $S(G)$. Let $\mathfrak M$ denote the Hardy--Littlewood maximal function: $\mathfrak M f(x)$ is the supremum of  the averages of $\frac{1}{|B|}\int_B |f|$ taken over all  open balls $B$  that contain $x$. Consider further the sets $O_k^{*}:=\{x\in \R^n: \mathfrak M\mathbbm{1}_{O_k}(x)>1-\gamma\}$ with $\gamma$ sufficiently close to one (to be revealed in a moment). Then, as $O_k$ is open and using the weak type (1,1) of $\mathfrak M$, 
we have 
$$O_k\subset O_k^{*},\quad |O_k^{*}|\leq c(\gamma)|O_k|,\quad \widehat{O_k}\subset \widehat{O_k^{*}}$$ 
and the set $\bigcup_k \widehat{O_k^{*}}$ contains the support of the function $G$. Consider a Whitney decomposition of $O_k^{*}$: $O_k^{*}=\bigcup_j Q_j^k$ with the cubes $Q_j^k$ having the property that their diameters are comparable with the distance from $Q_j^k$ to the complement of the set $O_k^{*}.$ Next, consider a ball $B_j^{k}$ centered at the center of the cube $Q_j^{k}$ and having the radius equal to some large constant times the edge length of $Q_j^{k}.$ 
If this constant is large enough, then we can write the following disjoint union :
$$\Delta^k=\bigcup_j \Delta_j^k,$$
where $\Delta^k:=\widehat{O_k^{*}}-\widehat{O_{k+1}^{*}}$ and
$$\Delta_j^k:= \widehat{B_j^k}\cap ((0,\infty)\times Q_j^k)\cap \biggl(\widehat{O_k^{*}}-\widehat{O_{k+1}^{*}}\biggr)\cdotp$$
We are now in position to define the desired atomic decomposition. Denote $$a_j^k:=G\mathbbm{1}_{\Delta_j^k}|B_j^k|^{-1/2}(\mu_j^k)^{-1/2}
\quad\hbox{with }\ 
\mu_j^k:=\int_{\Delta_j^k}|G(t,y)|^2 \,dt \,dy.$$
 Set $\lambda_j^k=|B_j^k|^{1/2}(\mu_j^k)^{1/2}.$ We then have 
$$G=\sum_{k,j}\lambda_j^k a_j^k.$$
Note that the functions $a_j^k$ are atoms associated to the balls $B_j^k.$ 
\smallbreak
Recall that $\|a_j^k\|_{T^{1,2}}\lesssim 1$. Hence, $\|S(G)\|_{L^1(\R^n)}\lesssim \sum \lambda_{j}^{k}.$ Therefore, it is left to show that
\begin{equation}
\label{normatom}
\sum_{j,k}\lambda^k_j \lesssim \|S(G)\|_{L^1(\R^n)}.
\end{equation}
First of all, it is easy to see that
$$\mu_j^k=\int_{\Delta_j^k}|G(t,y)|^2 dy dt \leq \int_{\widehat{B_j^k} \cap (\widehat{O_{k+1}})^c}|G(t,y)|^2 \,dy \,dt.$$
Second of all, we shall prove that
\begin{equation}
\label{normatom1}
 \int_{\widehat{B_j^k} \cap (\widehat{O_{k+1}})^c}|G(t,y)|^2\, dy \,dt \lesssim \int_{B_j^k \cap (O_{k+1})^c}|S(G)(x)|^2\, dx.
\end{equation}
For a set $E\subset \R^n,$ introduce the notation $\Gamma(E):=\bigcup_{x\in E} \Gamma(x)$. Note that $\Gamma(E)$ is a subset of $\R^{n+1}_+$ that consists of all parabolic cones centered at points of $E.$ Apply the Fubini theorem~:
$$\begin{aligned}  
\int_{B_j^k \cap (O_{k+1})^c}|S(G)(x)|^2 \,dx&=\int_{B_j^k \cap (O_{k+1})^c}\int_{\R^{n+1}_+}\mathbbm{1}_{B(x,\sqrt{t})}(y)|f(t,y)|^2\, \frac{dy\, dt}{t^{n/2}} \,dx \\
&=\int_{\Gamma(B_j^k \cap (O_{k+1})^c)}\bigl|B(y,\sqrt{t})\cap B_j^k \cap (O_{k+1})^c\bigr|  \frac{|f(t,y)|^2 \,dy \,dt}{t^{n/2}}\cdotp
\end{aligned}$$
Inequality~\eqref{normatom1} will follow, if we show that for all $(t,y)\in \Gamma(B_j^k \cap (O_{k+1})^c),$ there holds
\begin{equation}
\label{normatom2}
t^{n/2}\lesssim \bigl|B(y,\sqrt{t})\cap B_j^k \cap (O_{k+1})^c\bigr|.
\end{equation}
Denote $F:=B_j^k \cap (O_{k+1})^c.$ Note that $(t,y)\in \Gamma(B_j^k \cap (O_{k+1})^c)$ implies that there exists $x\in B_j^k \cap (O_{k+1})^c$ such that $|x-y|\leq \sqrt{t}.$ It can be easily seen from geometric observations that there exists a universal constant  $\varepsilon<1$ depending only on $n$  such that :
$$ \bigl|B(x,\sqrt{t})\cap B(y,\sqrt{t})^c\bigr|\leq \varepsilon  \bigl|B(x,\sqrt{t})\bigr|.$$
Observe that $$(F\cap B(y,\sqrt{t})) \cup (B(x,\sqrt{t})\cap B(y,\sqrt{t}))^c \supset F\cap B(x,\sqrt{t}),$$ and hence
$$\begin{aligned}
|F\cap B(y,\sqrt{t})| &\geq |F\cap B(x,\sqrt{t})^c| - |B(y,\sqrt{t})\cap B(y,\sqrt{t})^c|\\ &\geq 
 |F\cap B(x,\sqrt{t})^c| - \varepsilon |B(x,\sqrt{t})|\\ &\geq (1-\gamma)|B(x,\sqrt{t})|- \varepsilon |B(x,\sqrt{t})|,
\end{aligned}$$
where the last inequality follows from the fact that $x\in B_j^k \subset O_k$ (and hence $\mathfrak M\mathbbm{1}_{O_k}(x)>1-\gamma$). So, the lines~\eqref{normatom1} and~\eqref{normatom2} follow if only we take $\gamma$ sufficiently close to $1$.
\smallbreak
Using the lower bound~\eqref{normatom2} and the definition of the set $O_{k+1},$ we see that 
$\mu_j^k\lesssim |Q_j^k|2^{2k}.$
With help of this inequality, we now complete the proof of the estimate~\eqref{normatom} :
 $$\begin{aligned}
\sum_{j,k}\lambda_j^k&=\sum_{j,k}|B_j^k|^{1/2}(\mu_j^k)^{1/2}\\
 &\lesssim \sum_{j,k} |B_j^k|^{1/2} |Q_j^k|^{1/2}2^{k} \\
 &\lesssim \sum_{j,k} |Q_j^k|2^{k}\\
 &\lesssim \sum_{k} |O_k^*|2^{k}\lesssim \sum_{k} |O_k|2^{k} \lesssim \|S(G)\|_{L^1(\R^n)},
\end{aligned}$$
and the lemma is proven.
\end{proof}

We shall next show that the operator 
$$\mathcal MG(\cdot):=\int_0^\infty \nabla \mathbb{P} e^{t\Delta} G(t,\cdot) \,dt,$$
which is already defined and bounded from $L^2(\mathbb R_+^{n+1}, \mathbb C^n \otimes \mathbb C^n)$ to $L^2(\R^n, \mathbb C^n \otimes \mathbb C^n)$ (see the argument below when calculating $\mathcal Ma$) can be shown to extend from the tent space $T^{1,2}$ to the Hardy space $H^{1}(\R^n, \mathbb C^n \otimes \mathbb C^n)$. 
For the reader's convenience, we add a definition and some important properties of Hardy spaces.
\begin{defin}
\label{hardy}
Let $f$ be a bounded tempered distribution on $\R^n$ and let $0<p<\infty$. Let $\Psi$ be a positive function in the Schwartz class such that $\int_{\R^n} \Psi\,dx =1$ and denote $\Psi_t(x)=t^{-n}\Psi(x/t).$
We say that $f$ lies in the Hardy space $H^p(\R^n)$ if the non-tangential maximal function 
$$M^*f(x):=\sup_{t>0}\sup_{y\in B(x,t)}|\Psi_t\ast f(y)|$$
belongs to the space $L^p(\R^n).$ The corresponding norm is given by $\|f\|_{H^p(\R^n)}:=\|M^*f\|_{L^p(\R^n)}$
\end{defin} 
\begin{remark}
\label{nezavis}
The space defined above does not depend on the particular choice of $\Psi$. The proof of this statement can be found, for instance, in the book~\cite{grafmod}, Theorem 6.4.4.
\end{remark}
\begin{remark}
The spaces $H^p(\R^n)$ coincide with $L^p(\R^n)$ for $1<p<\infty$. However for $0<p\leq 1$ this is no longer true, see~\cite{grafmod}, Theorem 6.4.3. 
\end{remark}
\begin{remark}
The dual of the space $H^1(\R^n)$ is the space $\mathrm{BMO}(\R^n)$, see~\cite{grafmod}, Theorem 6.4.3.
\end{remark}
\begin{remark}
Based on the definition of the space $H^1(\R^n)$, one can define the space $H^1(\R^n, \mathbb C^n\otimes \mathbb C^n)$ in the component-wise way.
\end{remark}

Hardy spaces also admit atomic decompositions. However, we shall not use them in this text. On the other hand, the connected notion of a Hardy molecule will be very useful in the end of the proof of the third step. These Hardy molecules should be thought of as ``sums of Hardy atoms''. In other words, a Hardy molecule can have (in comparison to a Hardy atom) an unbounded support. However, it must satisfy a ``decay at infinity'' property as we shall observe in the following definition.

\begin{defin}
\label{molecula}
Let $0<p\leq 1 < q \leq \infty$ and $b>1/p-1/q.$ Then, a $(p,q,b)$ -- molecule centered at $x_0\in \R^n$ is a real-valued function $m$ defined on $\R^n$ and satisfying :
$$|||m|||:=\|m\|_q^{1-\theta} \|\,|\cdot -x_0|^{nb} m\|_q^{\theta}<\infty,$$
where $\theta = (1/p-1/q)/b,$ (so that $0<\theta<1$) and
$$\int_{\R^n} x^{\beta} m(x) \,dx=0$$
for every multi-index $\beta$ such that $|\beta|\leq [n(1/p-1)]$.
\end{defin}

\begin{remark}
The definition above is taken from the book~\cite{garsia} (see page 328, Definition 7.13). In the very same book, it is proven that each function $m$ as in Definition~\ref{molecula} belongs to the Hardy space $H^p(\R^n)$ with the norm control $\|m\|_{H^p(\R^n)} \lesssim |||m|||$ (see Theorem 7.16 
at page 330).
\end{remark}

We go back to the proof of the third step. Let $G\in T^{1,2}$. Thanks to the atomic decomposition of the space $T^{1,2}$ in Lemma~\ref{atomrazl}, we can write
$$G=\sum_i \lambda_i a_i$$
where $a_j$ are $T^{1,2}$ atoms and $\sum_j |\lambda_j| \lesssim \|G\|_{T^{1,2}}.$ Hence
\begin{equation*}
 \sum_j |\lambda_j| \|\mathcal Ma_j\|_{H^1(\R^n,\mathbb C^n \otimes \mathbb C^n)}
\lesssim \sum_j |\lambda_j| \lesssim \|G\|_{T^{1,2}},
\end{equation*} provided we know  that $\|\mathcal Ma\|_{H^1(\R^n,\mathbb C^n \otimes \mathbb C^n)} \lesssim 1$ for any atom $a\in T^{1,2}.$ Remark at this stage that such atoms are $L^2$ functions so that $\mathcal Ma$ was already defined.
This shows that  the series  $\sum_j \lambda_j \mathcal Ma_j$ converges in  the Banach space $H^1(\R^n,\mathbb C^n \otimes \mathbb C^n)$ and we can conclude provided we identify its limit as $\mathcal MG$. It will suffice to do it for $G$ in a dense class.
\smallbreak
We begin with the uniform estimate. Let $B$ be a ball in $\R^n$ and let $a\in T^{1,2}$ be an atom supported in the tent $\widehat{B}$. First, we need to prove the estimate
\begin{equation}
\label{atomraz}
\int_{2B}|\mathcal Ma(x)|^2\,dx\leq\int_{\R^n}|\mathcal Ma(x)|^2\,dx\lesssim \frac{1}{|B|}\cdotp
\end{equation}
Note that the Plancherel theorem yields :
$$\int_{\R^n}|\mathcal Ma(x)|^2\,dx=\frac1{(2\pi)^n}\int_{\R^n}|\mathcal F \mathcal Ma(\xi)|^2\,d\xi.$$
Hence, using the definition of $\mathcal M$ and denoting by $M_{\mathbb P}$ the matrix symbol of $\mathbb P,$  we get
$$\int_{\R^n}|\mathcal Ma(x)|^2\,dx=\frac1{(2\pi)^n} \int_{\R^n}\biggl|\int_0^\infty \xi \otimes M_{\mathbb P} (\xi) \,e^{-t|\xi|^2}{\mathcal F}a(t,\xi)\,dt\biggr|^2dx$$
whence, by using Cauchy-Schwarz inequality and the fact that  $M_{\mathbb P}$ is bounded by $1,$
$$\begin{aligned}
\int_{\R^n}|\mathcal Ma(x)|^2\,dx &\leq \frac1{(2\pi)^n}\int_{\R^n} \biggl(\int_0^\infty |\xi|^2 e^{-2t|\xi|^2}\,dt\biggr)
\biggl(\int_0^\infty |\mathcal F a(t,\xi)|^2\,dt\biggr)d\xi\\
&\leq  \frac12 \frac1{(2\pi)^n} \int_{\R^n}\int_0^\infty  |\mathcal F a(t,\xi)|^2\,dt\\
&= \frac12 \int_{\R^n}\int_0^\infty  |a(t,x)|^2\,dt\,dx \leq \frac12 |B|^{-1},\end{aligned}
$$ 
since $a$ is an atom supported in $\widehat B.$ 
Hence, the estimate~\eqref{atomraz} follows.


\medbreak
Second, fix $q\in(1,n/(n-1))$ and denote $b:=2(q-1)/q.$ Note that $q\leq 2$ once $n\geq 2.$ We shall show the inequality
\begin{equation}
\label{atomdva}
\int_{\R^n} |\mathcal Ma(x)|^{q} |x-x_0|^{qnb}\,dx\lesssim R^{n(qb+1-q)},
\end{equation}
where $x_0$ is the center of the ball $B$ and $R$ is its radius. As a consequence of the estimate~\eqref{atomraz}, thanks to the H\"older inequality we get the bound
\begin{equation}
\label{atomtri}
\int_{2B} |\mathcal Ma(x)|^q |x-x_0|^{qnb}\,dx \lesssim |B|^{qb}\int_{2B} |\mathcal Ma(x)|^q \,dx \lesssim R^{n(qb+1-q)}.
\end{equation}
Denote for $j\geq 1$ the dyadic annuli $\Omega_j:=2^{j+1}B\backslash 2^{j}B.$ The decay at infinity of the kernel $q_t$ and the Cauchy--Schwarz inequality yield
\begin{eqnarray}
\label{atomchetyre}
\int_{\Omega_j} |\mathcal Ma(x)|^q |x-x_0|^{qnb}\,dx &\!\!\!\leq\!\!\!& 
|B|^{qb} 2^{jqnb} \int_{\Omega_j} |\mathcal Ma(x)|^q \,dx\nonumber\\ &\!\!\!\leq\!\!\!& 
|B|^{qb} 2^{jqnb} \int_{\Omega_j} \Bigl(\int_{\widehat{B}}|a|^2\Bigr)^{q/2}\cdot \Bigl(\int_{\widehat{B}}|q_t(x-y)|^2 \,dy \,dt\Bigr)^{q/2}dx\nonumber\\ &\!\!\!\leq\!\!\!& 
 |B|^{qb-q/2}2^{jqnb}\int_{\Omega_j}\Bigl(\int_{\widehat{B}}|q_t(x-y)|^2 \,dy \,dt \Bigr)^{q/2}\, dx\nonumber\\
&\!\!\! \lesssim\!\!\!&
R^{nqb-\frac{nq}{2}}2^{jqnb}|\widehat{B}|^{\frac{q}{2}}
|\Omega_j| (2^jR)^{-q(n+1)}\nonumber\\ 
&\!\!\!=\!\!\!&R^{qnb-\frac{nq}{2}+\frac{(n+2)q}{2}+n-q(n+1)}2^{j(qnb-qn-q+n)},
\end{eqnarray}
where the last inequality above is justified by the fact that $|x-y|\lesssim (2^jR)^{-(n+1)}$ once $x\in \Omega_j$ and $(t,y)\in \widehat{B}.$ Note that 
$$qnb-nq/2+(n+2)q/2+n-q(n+1)\equiv n(qb +1-q).$$
This means that the estimates~\eqref{atomtri} and~\eqref{atomchetyre} are of the same order in $R.$  Since $q<n/(n-1)$, recalling the definition of $b$ one gets $qnb-qn-q+n=q(n-1)-n<0.$ So, the corresponding sum over integer $j\geq 0$ is convergent. Hence the estimate~\eqref{atomdva} follows. Arguing in the same way as in the estimate of the term~\eqref{atomchetyre} one gets
$$
\int_{(2B)^c} |\mathcal Ma(x)|^q \,dx \lesssim R^{n(1-q)}. 
$$
This and the second part of the inequality~\eqref{atomtri} give
\begin{equation}
\label{atompyat}
\int_{\R^n} |\mathcal Ma(x)|^q \, dx \lesssim R^{n(1-q)}.
\end{equation}

Third, observe that  $$\int_{\R^n}\mathcal Ma(x) \,dx=0.$$ This follows from the observation that $$\int_{\R^n}\mathcal Ma(x)\, dx= \int_0^\infty \int_{\R^n} \nabla \mathbb{P} e^{t\Delta}a(t,x) \,dx \,dt$$ and the divergence theorem gives $$\int_{\R^n} \nabla \mathbb{P} e^{t\Delta}a(t,x) \,dx=0.$$ This observation together with the inequalities~\eqref{atomdva} and~\eqref{atompyat} signify (by definition) that $\mathcal Ma$ is a $(1,q,b)$ matrix valued Hardy molecule (see Definition~\ref{molecula}) and in turn that $\|\mathcal Ma\|_{H^1(\R^n,\mathbb C^n\otimes \mathbb C^n)}\lesssim 1.$ 

It remains to do the identification of the series as $\mathcal M G$ for $G$ in a dense class of $T^{1,2}$. We adapt an argument of \cite{ahm} for the convenience of the reader. The subspace of those $T^{1,2}$ functions having compact support in $\mathbb R_+^{n+1}$ is dense in $T^{1,2}$. Remark that this space is also contained in $L^2(\mathbb R_+^{n+1},\mathbb C^n\otimes \mathbb C^n)$. Let $G$ be in such a space and pick as before an atomic decomposition
$G=\sum_i \lambda_i a_i$. The convergence is in $T^{1,2}$. If the series was to converge also in $L^2(\mathbb R_+^{n+1},\mathbb C^n\otimes \mathbb C^n)$ we would be done 	as $\mathcal M$ is continuous from $L^2(\mathbb R_+^{n+1}, \mathbb C^n\otimes \mathbb C^n)$  to $L^2(\mathbb R^n,\mathbb C^n\otimes \mathbb C^n)$ and the equality $\mathcal M G=\sum_i \lambda_i \mathcal M a_i $ would follow (with $L^2$ convergence). To see this equality with the appropriate interpretation, we proceed with a further truncation. For $k\in \mathbb N$, let $\chi_k(t,x)= \mathbbm 1_{B(0, 2^k)} (x) \mathbbm 1_{[2^{-k},2^k]}(t)$. Then for $k$ large enough,  $G=G\chi_k= \sum_i \lambda_i a_i\chi_k$ with convergence in $T^{1,2}$. Since $a_i$  is supported in a tent $\widehat B_i$,  if the radius  of $B_i$ is too small, then $a_i\chi_k=0$. 
Thus the radii are bounded below in this series and this, together with $T^{1,2}$ convergence, implies $L^2$ convergence. It follows that $\mathcal M G=\sum_i \lambda_i \mathcal M (a_i \chi_k)$ for $k$ large enough. The calculations above implies that $\mathcal M (a_i \chi_k)$ are uniformly bounded in $H^1$ and also that 
for $i$ fixed, $\mathcal M (a_i \chi_k) \to \mathcal M a_i$ in $H^1$ as $k\to \infty$. It easily follows that  
$\sum_i \lambda_i \mathcal M (a_i \chi_k) \to \sum_i \lambda_i \mathcal M a_i$ in $H^1$ as $k\to \infty$ and we are done.

This finishes the proof of the fact that the operator $\mathcal M$ extends boundedly from the tent space $T^{1,2}$ to the Hardy space $H^1(\R^n,\mathbb C^n\otimes \mathbb C^n).$
\smallbreak
We shall further prove that for a function $h$ in the Hardy space $H^1(\R^n),$  there holds that $(s,x)\rightarrow e^{s\Delta}h(x)\in T^{1,\infty}(\R^n,\mathbb C)$ with the norm estimate $$\|N(e^{s\Delta}h)\|_{L^1(\R^n)}\lesssim \|h\|_{H^1(\R^n)}.$$ Let $x\in \R^n$. Note that by definition 
$$\begin{aligned}
N(e^{s\Delta}h)(x)&= \sup_{\{(s,y) : y\in B(x,\sqrt{s})\}}\left|\int_{\R^n}\Phi_s(y-z) h(z) \,dz\right|\\
&=\sup_{\{(\xi,y) : y\in B(x,\xi)\}}\left|\int_{\R^n}\Phi_{\xi^2}(z) h(y-z) \,dz\right|,\end{aligned}$$
where $\Phi_s$ for $s>0$ stands for the heat kernel, see Definition~\ref{heat}. Since the function 
$$\Phi_{\xi^2}(z)=(4\pi \xi^2)^{-n/2}e^{-|z|^2/(4\xi^2)}$$
satisfies conditions of the definition of the Hardy space from Remark~\ref{nezavis}, the assertion  $N(e^{s\Delta}h) \in L^1(\mathbb R^n)$ follows.  Furthermore  $(s,x)\mapsto e^{s\Delta}h(x)$ is continuous on $\mathbb R_+^{n+1}$ and has the desired non-tangential almost everywhere limit by standard arguments.
The same of course holds component-wise for $\mathbb C^n\otimes \mathbb C^n$ valued Hardy functions.

So, we can conclude that the operator $A_2^*$ is bounded from $T^{1,2}$ to $T^{1,\infty}$ which means that the third step is proven. 
\end{proof}




\end{document}